\renewcommand{\thispagestyle}[1]{} 
\theoremstyle{plain}
\newtheorem{thm}{Theorem}[section]
\newtheorem{observation}{Observation}[section]
\theoremstyle{definition}
\newtheorem{definition}{Definition}[section]
\theoremstyle{remark}
\newtheorem{claim}{Claim}[section]
\newtheorem{question}{Question}[section]
\begin{document}

\title{Knot mosaic tabulation}

\author[H. J. Lee]{Hwa Jeong Lee}
\address{Department of Mathematical Sciences, KAIST, 291 Daehak-ro, Yuseong-gu, Daejeon 34141, Korea}
\email{hjwith@kaist.ac.kr}
\thanks{The first author was supported by Basic Science Research Program through the National Research Foundation of Korea (NRF) funded by the Ministry of  Science, ICT $\&$ Future Planning (NRF-2015R1C1A2A01054607).}

\author[L. D. Ludwig]{Lewis D. Ludwig}
\address{Department of Math and Computer Science, Denison University, Granville, OH 43020, USA}
\email{ludwigl@denison.edu}

\author[J. S. Paat]{Joseph S. Paat}
\address{Department of Applied Mathematics and Statistics, Johns Hopkins, Baltimore, MD 21218-2682, USA}
\email{jpaat1@jhu.edu}

\author[A. Peiffer]{Amanda Peiffer}
\address{Department of Math and Computer Science, Denison University, Granville, OH 43020, USA}
\email{peiffe\_a1@denison.edu}

\keywords{Mosaic number, crossing number, knot mosaics}
\subjclass[2010]{ 57M25, 57M27}

\begin{abstract}

In 2008, Lomonaco and Kauffman introduced a knot mosaic system to define a quantum knot system. A quantum knot is used to describe a physical quantum system such as the topology or status of vortexing that occurs on a small scale can not see. Kuriya and Shehab proved that knot mosaic type is a complete invariant of tame knots. In this article, we consider the mosaic number of a knot which is a natural and fundamental knot invariant defined in the knot mosaic system. We determine the mosaic number for all eight-crossing or fewer prime knots.  This work is written at an introductory level to encourage other undergraduates to understand and explore this topic.  No prior of knot theory is assumed or required.
\end{abstract}

\maketitle

\section{Introduction}
In this work we will determine the mosaic number of all 36 prime knots of eight crossings or fewer.  Before we do this, we will give a short introduction to knot mosaics.  Take a length of rope, tie a knot in it, glue the ends of the rope together and you have a {\it mathematical knot} -- a closed loop in 3-space.  A rope with its ends glued together without any knots is referred to as the {\it trivial knot}, or just an unknotted circle in 3-space.  There are other ways to create mathematical knots aside from rope.  For example, {\it stick knots} are created by gluing sticks end to end until a knot is formed (see Adams~\cite{A2004}).  In 2008, Lomonaco and Kauffman~\cite{LK2008} developed an additional structure for considering knots which they called {\it knot mosaics}.  In 2014, Kuriya and Shehab~\cite{KS2014} showed that this representation of knots was equivalent to tame knot theory, or knots with rope, implying that tame knots can be represented equivalently with knot mosaics.  This means any knot that can be made with rope can be represented equivalently with a knot mosaic.\\

A {\it knot mosaic} is the representation of a knot on an $n\times n$ grid composed of 11 tiles as depicted in Figure~\ref{F:Tiles}.  A tile is said to be {\it suitably connected} if each of its connection points touches a connection point of a contiguous tile.  Several examples of knot mosaics are depicted in Figure~\ref{F:Example}.  It should be noted that in Figure~\ref{F:Example}, the first mosaic is a knot, the trefoil knot, the second mosaic is a link, the Hopf Link, and the third is the composition of two trefoil knots (remove a small arc from two trefoils then connect the four endpoints by two new arcs depicted in red, denoted by $3_1 \# 3_1$).  A knot is made of one component (i.e. one piece of rope), and a link is made of one or more components (i.e. one or more pieces of rope).  For this work, we will focus on knot mosaics of {\it prime knots}.  A prime knot is a knot that cannot be depicted as the composition of two non-trivial knots.  The trefoil is a prime knot. 

\begin{figure}[htp]
\begin{center}
\includegraphics[scale=0.92]{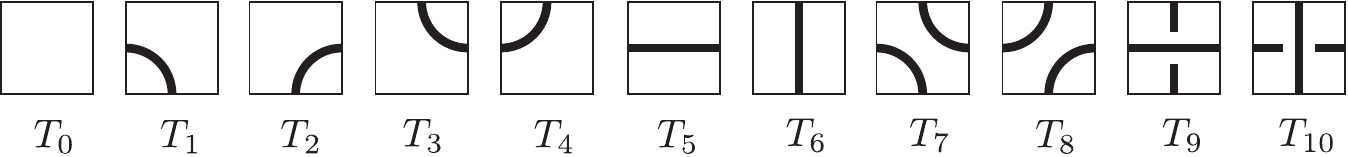}
\vspace{-2mm}
\caption{Tiles $T_0$\,--\,$T_{10}$ respectively.}\label{F:Tiles}
\end{center}
\end{figure}

\begin{figure}[htb]
   \begin{minipage}{1.25in}
   \begin{center}
    \includegraphics[height=0.9in,clip]{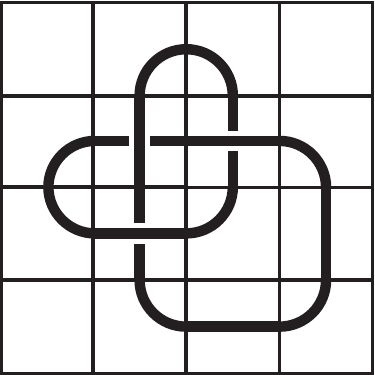}\\ \vspace{1.5mm}
   {Trefoil}
   \end{center}
    \end{minipage}%
    \/ \/
    \begin{minipage}{1.25in}
   \begin{center}
   \includegraphics[height=0.9in,clip]{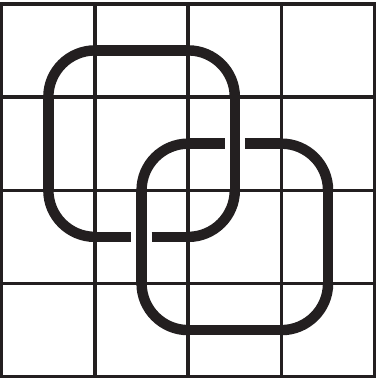}\\ \vspace{1.5mm}
   {Hopf Link}
   \end{center}
    \end{minipage}
     \begin{minipage}{1.25in}
   \begin{center}
   \includegraphics[height=0.9in,clip]{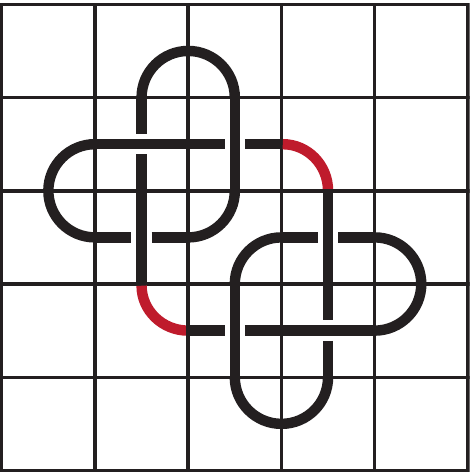}\\ \vspace{1.5mm}
   {Composition}
   \end{center}
    \end{minipage}
    \begin{minipage}{1.35in}
   \begin{center}
   \includegraphics[height=0.9in,clip]{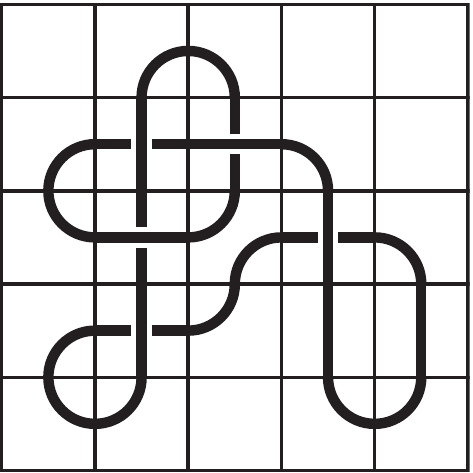} \\ \vspace{1.5mm}  
   {Non-reduced Trefoil}
   \end{center}
    \end{minipage}%
    \caption{Examples of link mosaics.}
    \label{F:Example}
\end{figure}

\medskip

When studying knots, a useful and interesting topic used to help distinguish two knots is {\it knot invariants}.  A knot invariant is a quantity defined for each knot that is not changed by {\it ambient isotopy}, or continuous distortion, without cutting or gluing the knot.  One such knot invariant is the {\it crossing number} of a knot.  The crossing number is the fewest number of crossings in any diagram of the knot.  For example, the crossing number of the trefoil is three, which can be seen in Figure~\ref{F:Example}.  A {\it reduced} diagram of a knot is a projection of the knot in which none of the crossings can be reduced or removed.  The fourth knot mosaic depicted in Figure~\ref{F:Example} is an example of a non-reduced trefoil knot diagram.  In this example the crossing number of three is not realized because there are two extra crossings that can be easily removed. \\

An interesting knot invariant for knot mosaics is the {\it mosaic number}.  The mosaic number of a knot $K$ is the smallest integer $n$ for which $K$ can be represented on an $n\times n$ mosaic board.  We will denote the mosaic number of a knot $K$ as $m(K)$.  For the trefoil, it is an easy exercise to show that mosaic number for the trefoil is four, or $m({\rm 3_1})=4$.  To see this, try making the trefoil on a $3\times 3$ board and arrive at a contradiction.\\

Next, we introduce a technique that can be used to ``clean up" a knot mosaic by removing unneeded crossing tiles.  In 1926, Kurt Reidemeister demonstrated that two knot diagrams belonging to the same knot, up to ambient isotopy, can be related by a sequence of three moves, now know as the Reidemeister moves~\cite{R1926}.  For our purposes, we will consider two of these moves on knot mosaics, the mosaic Reidemeister Type I and Type II moves as described by Lomonaco and Kauffman~\cite{LK2008}.  For more about Reidemeister moves, the interested reader should see Adams~\cite{A2004}.\\


The mosaic Reidemeister Type I moves are the following:
\begin{figure}[ht]
\begin{minipage}{1.4in}
\includegraphics[height=.5in,clip]{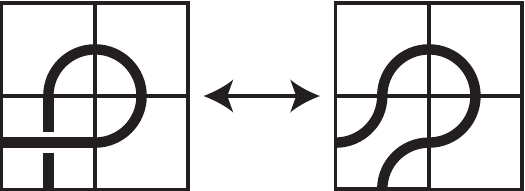}
\end{minipage}
\mbox{}\hspace{.4in}
\begin{minipage}{1.4in}
\includegraphics[height=.5in,clip]{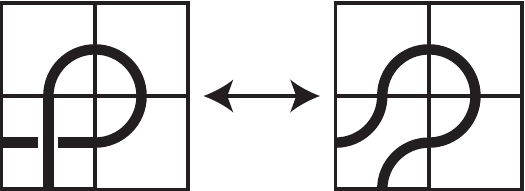}
\end{minipage}
\caption{Reidemeister Type I moves.}\label{F:R1}
\end{figure}
\medskip

The mosaic Reidemeister Type II moves are given below:
\begin{figure}[htp]
\begin{minipage}{1.4in}
\includegraphics[height=.5in,clip]{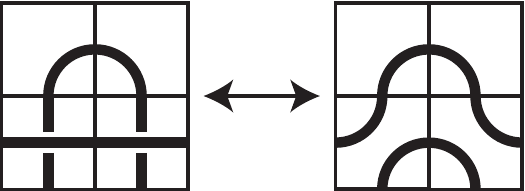}
\end{minipage}
\mbox{}\hspace{.4in}
\begin{minipage}{1.4in}
\includegraphics[height=.5in,clip]{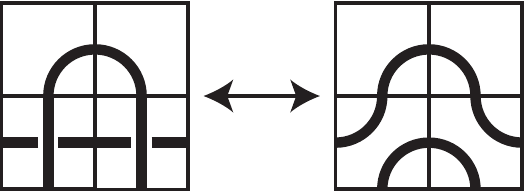}
\end{minipage}
\mbox{}\hspace{.4in}
\begin{minipage}{1.4in}
\includegraphics[height=.5in,clip]{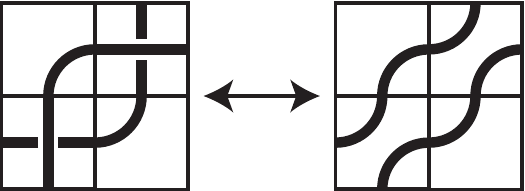}
\end{minipage}
\caption{Reidemeister Type II moves.}\label{F:R2}
\end{figure}

\medskip

Next we make several observations that will prove useful.

\medskip

\begin{observation}[The two-fold rule]\label{O:TwoFold}
Once the inner tiles of a mosaic board are suitably placed, there are only two ways to complete the board so that it is suitable connected, resulting in a knot or a link~\cite{H2014}.
\end{observation}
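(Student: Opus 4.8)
The plan is to reduce the two-fold rule to a one-dimensional linear system over $\mathbb{F}_2$. Call the central $(n-2)\times(n-2)$ block of positions the \emph{inner} positions and the remaining $4(n-1)$ positions the \emph{boundary} positions; the boundary positions form a single cyclic ring around the frame of the board. Two finite facts about the eleven tiles of Figure~\ref{F:Tiles} do the work. First, each tile carries an even number ($0$, $2$, or $4$) of connection points. Second, a tile possessing a connection-point-free edge must be one of $T_0$, one of the four arc tiles, or one of the two line tiles --- the crossings $T_9,T_{10}$ and the double-arc tiles $T_7,T_8$ carry connection points on all four edges --- and these seven tiles are pairwise distinguished by which of their edges carry connection points. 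Because every boundary position has at least one edge on the frame of the board, and frame edges must be connection-point-free, each boundary tile is uniquely pinned down once we know the connection-point pattern on its remaining (non-frame) edges.

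Next I would use the hypothesis that the inner tiles are suitably placed, i.e.\ suitably connected among themselves. This fixes the connection points along the \emph{interface}, namely the edges shared between an inner position and a boundary position; moreover, reading the inner block as a tangle in a box, each strand in it is either a closed loop or an arc with exactly two endpoints on the interface, so the interface carries an even number of connection points. Now put an $\mathbb{F}_2$ variable $e_1,\dots,e_{4(n-1)}$ on each edge between two consecutive boundary positions around the ring. A boundary position $t$ meets exactly two ring edges, say $e_i$ and $e_{i+1}$; its frame edge(s) contribute no connection points, and its interface edge contributes a known constant $d_t\in\{0,1\}$ (a corner position has no interface edge, so $d_t=0$ there). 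Since $t$ must carry an even number of connection points, this reads $e_i+e_{i+1}=d_t$.

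Summing the $4(n-1)$ equations around the ring gives $0=\sum_t d_t$, which is exactly the interface-parity fact just noted, so the cyclic system is consistent; being of the form $e_{i+1}=e_i+d_t$, it then has precisely two solutions (choose $e_1\in\{0,1\}$ freely and propagate around the ring), and the two solutions differ in every $e_i$. By the uniqueness statement from the first paragraph, each of these two connection-point patterns extends to exactly one filling of the boundary, and the two fillings are genuinely different: for instance, the tile in the top-left corner switches between $T_0$ and an arc when all ring edges flip. Each filling yields a suitably connected mosaic --- hence a knot or a link --- and these are all the completions, proving the Observation. The only step needing real care is the finite tile bookkeeping behind the uniqueness claim, especially verifying that the corner positions (two frame edges, no interface edge) behave as asserted and that no completion can covertly require a crossing or double-arc tile along the boundary.
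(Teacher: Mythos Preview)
Your argument is correct and complete. The parity reduction over $\mathbb{F}_2$ on the cyclic ring of boundary edges is exactly the right mechanism: the even connection-point count on every tile forces each boundary position to impose a single linear constraint $e_i+e_{i+1}=d_t$, the interface parity (which you correctly derive from the tangle structure of the suitably placed inner block) guarantees consistency of the cyclic system, and the single free bit $e_1$ yields precisely the two completions. Your check that the seven tiles $T_0,\dots,T_6$ are distinguished by their connection-point patterns, and that no boundary position can accommodate $T_7,\dots,T_{10}$, closes the argument cleanly.

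As for comparison with the paper: the paper does not prove this observation at all. It is simply stated with a citation to~\cite{H2014}, where the two-fold rule is established. So your proposal is not an alternative route but rather a self-contained proof where the paper offers none. The $\mathbb{F}_2$ formulation you give is somewhat more explicit than what one typically finds in the mosaic literature, which tends to argue the two-fold rule by a direct ``propagate around the boundary'' description; your linear-algebraic packaging makes the consistency condition (even interface parity) and the dimension count (one free parameter) especially transparent. One small point worth making explicit when you write it up: the phrase ``suitably placed'' in the observation should be read as ``suitably connected among themselves at every interior edge,'' since otherwise there are zero completions rather than two; you use this implicitly when invoking the tangle structure, and it would help the reader to say so.
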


\medskip

For any given $n\times n$ mosaic board, we will refer to the collection of inner tiles as the {\it inner board}.  For example, in Figure~\ref{F:5by5} the tiles $I_1$-$I_9$ would make the inner board for this mosaic board.

\medskip

\begin{observation}\label{O:EvenAll}
Assume $n$ is even. For a board of size $n$ with the inner board consisting of all crossing tiles, any resulting suitably connected mosaic will either be a $n-2$ component link mosaic or $n-3$ component non-reduced link mosaic, see Figure~\ref{F:4_1} for example.
\end{observation}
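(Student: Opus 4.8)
The plan is to reduce everything to a combinatorial picture of ``tracks'' on the board together with the two-fold rule. First, note that the number of components of a link diagram depends only on the underlying projection, so it is irrelevant which of the two crossing tiles $T_9,T_{10}$ occupies each inner cell: in both crossing tiles the horizontal strand joins the left and right edges, the vertical strand joins the top and bottom edges, and the two strands do not meet inside the tile. Consequently, with the inner board consisting entirely of crossing tiles, each of the $n-2$ inner rows carries one horizontal track running straight across that row, each of the $n-2$ inner columns carries one vertical track, and no two tracks ever meet inside the inner board. Hence the components of the resulting link are determined purely by how the boundary tiles (and the four corner tiles) cap off and pair up the track-ends that protrude onto the four sides of the board.

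Second, I would pin down the two completions guaranteed by Observation~\ref{O:TwoFold}. Beginning with the tile just to the right of the top-left corner, suitable connection forces a domino-like propagation of arc tiles along each side of the board, and following this through produces exactly two boards. In the first, all four corner tiles are blank and on each side the protruding track-ends are capped off in consecutive pairs $\{1,2\},\{3,4\},\dots,\{n-3,n-2\}$, which is consistent precisely because $n$ is even (so $n-2$ is even). In the second, all four corner tiles are arc tiles: each corner joins one extreme vertical-track end to one extreme horizontal-track end, and on each side the remaining $n-4$ track-ends are capped off in consecutive pairs $\{2,3\},\{4,5\},\dots,\{n-4,n-3\}$ (again consistent since $n-4$ is even). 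I expect the main obstacle to be exactly this step: checking that both of these boards really are suitably connected and that the forced propagation genuinely leaves no other option. Everything afterward is bookkeeping.

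Third, I would count components. In the first completion, the top and bottom sides pair the vertical tracks in the same way, so vertical tracks $2k-1$ and $2k$ close into one loop for each $k$, giving $(n-2)/2$ loops; symmetrically the horizontal tracks give another $(n-2)/2$ loops; and since a crossing tile never joins a horizontal strand to a vertical one, all of these are distinct, for a total of $n-2$ components. In the second completion, the ``middle'' vertical tracks $2,\dots,n-3$ close among themselves into $(n-4)/2$ loops, the middle horizontal tracks likewise into $(n-4)/2$ loops, and the four extreme tracks (leftmost and rightmost inner columns, topmost and bottommost inner rows) are threaded through the four corners into a single loop, for a total of $(n-4)/2+(n-4)/2+1=n-3$ components.

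Finally, I would check that the second completion is non-reduced. At the inner crossing closest to the top-left corner, the corner arc joins that crossing's top edge to its left edge, so the strand leaves the crossing, runs around the corner through three arc tiles, and re-enters the same crossing --- a kink that is removed by a mosaic Reidemeister Type~I move (Figure~\ref{F:R1}), so that diagram carries a removable crossing. Together with Observation~\ref{O:TwoFold} this exhausts the two possible completions and establishes the statement; Figure~\ref{F:4_1} illustrates the case $n=4$, where the two completions are a two-component link and a non-reduced one-component (knot) diagram.
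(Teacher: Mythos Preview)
The paper does not actually prove this observation: it is stated without argument and illustrated only by Figure~\ref{F:4_1}, the $n=4$ case. Your write-up therefore supplies a proof where the paper offers none, and the approach you outline is correct.

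The reduction to horizontal and vertical ``tracks'' is exactly right, since $T_9$ and $T_{10}$ share the same underlying $4$-valent projection, so the component count is insensitive to over/under data. Your identification of the two boundary completions is also correct, and the step you flag as the main obstacle is routine once one notes that a tile in the top outer row can carry no top connection point and must carry a bottom one (forced by the crossing beneath it), hence must be $T_1$ or $T_2$; the choice at a corner then propagates deterministically along each side, and the parity of $n-2$ is precisely what lets both propagations close up consistently at all four corners. Your component counts for the two completions are correct and match Figure~\ref{F:4_1} when $n=4$ (the $2$-component Solomon link and the $1$-component non-reduced unknot). The non-reducedness argument is also valid: the three corner arcs carry the top edge of the inner corner crossing back into its left edge, so that crossing is nugatory and the diagram is non-reduced in the paper's sense.
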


\medskip

\begin{observation}\label{O:OddAll}
Assume $n$ is odd. For a board of size $n$ with the inner board consisting of all crossing tiles, any resulting suitably connected mosaic is a non-reduced knot mosaic.
\end{observation}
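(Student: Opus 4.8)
The plan is to pin down the forced structure of the boundary tiles and then trace the single strand all the way around the board.

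First I would record how the all-crossing inner board organizes the diagram. Reading across an inner row $i$ (with $2\le i\le n-1$), the crossing tiles there carry one strand straight from the west edge of the inner board to the east edge; call it the horizontal strand $h_i$, with ends $W_i$ and $E_i$. Likewise each inner column $j$ carries a vertical strand $v_j$ with ends $N_j$ and $S_j$, and $h_i$ meets $v_j$ exactly once, at the crossing tile in position $(i,j)$. Since a crossing tile is traversed straight through, the number of components of the mosaic equals the number of cycles obtained when the $2(n-2)$ strands are spliced together by the arcs sitting in the boundary tiles, so it suffices to understand those boundary arcs.

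Next I would pin down the boundary. A non‑corner boundary tile cannot carry a connection point on the outer side of the board, while the inner crossing adjacent to it forces a connection point on its inner side; the only tiles meeting both conditions are the two arc tiles that have an inner‑facing endpoint. Reading along one side of the board, consecutive such arcs must therefore alternate between the two possible orientations. Because $n-2$ is odd, the first and the last arc on each side have the \emph{same} orientation; chasing this constraint into the corners then shows that, in either of the two completions permitted by the two‑fold rule (Observation~\ref{O:TwoFold}), exactly two of the four corner tiles are arcs and they form a diagonally opposite pair, while on each side the edge arcs pair up every strand‑end except for two ``loose'' ends that are taken care of by the corner arcs. Concretely I would check that, up to the obvious mirror symmetry between the two completions, the boundary arcs realize the matchings $E_2\text{--}E_3,\ E_4\text{--}E_5,\dots$ and $W_3\text{--}W_4,\ W_5\text{--}W_6,\dots$ among the horizontal ends, $N_3\text{--}N_4,\dots$ and $S_2\text{--}S_3,\dots$ among the vertical ends, and that the two corner arcs give $W_2\leftrightarrow N_2$ and $E_{n-1}\leftrightarrow S_{n-1}$. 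Keeping the index parities straight here, and confirming that these really are the only two suitably connected completions, is the step I expect to be the main obstacle; everything afterward is short.

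Given this matching the knot claim is immediate: for every $j$ some boundary arc splices $h_j$ to $h_{j+1}$, so the horizontal strands form a single path with endpoints $W_2$ and $E_{n-1}$; likewise the vertical strands form a single path with endpoints $N_2$ and $S_{n-1}$; and the two corner arcs $W_2\leftrightarrow N_2$ and $E_{n-1}\leftrightarrow S_{n-1}$ glue these two paths into one closed loop, so the mosaic is a knot. Finally, for non‑reducedness I would look at one of the two ``arc'' corners, say the top‑left: there the corner arc together with the two adjacent edge arcs joins $W_2$ to $N_2$, i.e. it joins the strands $h_2$ and $v_2$, but $h_2$ and $v_2$ already cross at the corner crossing tile in position $(2,2)$, and the little region cut off at that corner meets no other part of the diagram. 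Hence a simple closed curve through that corner meets the diagram transversally only at that crossing, so the crossing is nugatory and the mosaic is non‑reduced (one Reidemeister~I move on this corner removes a crossing). The other completion is the mirror image of this one, so the same conclusions hold there, and since these are the only suitably connected completions we are done.
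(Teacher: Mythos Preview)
Your argument is correct, and in fact goes well beyond what the paper does: the paper states this as an Observation with no proof at all (it only remarks afterward that Observation~\ref{O:Corner} generalizes it, and supplies a figure). So there is nothing to compare your route to.

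For what it's worth, your structure is the natural one and the details check out. The alternation of the two admissible arc tiles along each boundary side, together with the parity $n-2$ odd, does force exactly two diagonally opposite corners to carry arcs in either of the two completions allowed by the two-fold rule; your explicit matching of the endpoints $W_i,E_i,N_j,S_j$ is accurate, and the zig-zag through the $h_i$'s (respectively $v_j$'s) really does produce a single path with the stated loose ends, which the two corner arcs close up into one component. The non-reducedness step is exactly the content of Observation~\ref{O:Corner} specialized to the corner, so you might simply cite that: the corner arc at, say, the top-left joins $W_2$ to $N_2$, and since $h_2$ and $v_2$ meet at the inner corner crossing $(2,2)$, that crossing is nugatory and a Reidemeister~I move removes it. The one place to be careful in a write-up is the step you already flagged---verifying that the forced boundary pattern really propagates around all four sides consistently and lands on a diagonal pair of arc-corners in \emph{both} completions---but this is a straightforward parity chase once one fixes the orientation of a single edge tile.
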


\medskip

Observation~\ref{O:OddAll} can be generalized in the following way.

\medskip

\begin{observation}\label{O:Corner} Let $M$ be a knot mosaic with two corner crossing tiles in a top row of the inner board. If the top boundary of the row has an odd number of connection points, then a Reidemeister Type I move can be applied to either corner of the row of $M$.  This extends via rotation to the outer-most columns and the lower-most row of the inner board. 
\end{observation}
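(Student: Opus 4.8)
The plan is to examine the outer boundary row of $M$ lying immediately above the given inner row, to extract a parity constraint from it, and then to use that constraint to locate a removable kink at a corner crossing. Index the tiles of $M$ by $(i,j)$, with $i$ counting rows from the top and $j$ counting columns from the left, so that the outer boundary is the set of tiles with $i\in\{1,n\}$ or $j\in\{1,n\}$, the top row of the inner board is row $2$, and its two corner tiles are $(2,2)$ and $(2,n-1)$. First I would note that row $1$ can contain no crossing tile and no double-arc tile, since each of these has a connection point on its top edge, which is impossible on the outer boundary of a suitably connected mosaic. Hence every tile of row $1$ is blank, the horizontal line tile, or one of the two arcs joining the bottom edge to a side edge, while $(1,1)$ and $(1,n)$ are each blank or the single corner arc. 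Consequently, inside row $1$ the strands are pairwise disjoint non-crossing caps, each joining two of the connection points along the bottom edge of row $1$, so the number of such connection points is \emph{even}.

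Next I would partition that even total. The ``top boundary of the inner row'' is exactly the $n-2$ tile interfaces below $(1,j)$ for $2\le j\le n-1$, and together with the two interfaces below $(1,1)$ and $(1,n)$ these exhaust the bottom connection-point slots of row $1$. Since the full count is even while the hypothesis makes the count on the inner $n-2$ slots odd, the count on the two corner slots is odd, and being $0$, $1$, or $2$ it equals $1$. Using the left--right symmetry of the configuration I may assume without loss of generality that the occupied corner slot is the one below $(1,1)$. Then $(1,1)$ is the corner arc, so $(2,1)$ has a connection point on its top edge; and because $(2,2)$ is a crossing, its left edge carries a connection point, which it shares with $(2,1)$. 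A left-boundary tile with connection points on its top and right edges has none on its left edge, hence exactly two connection points, hence is the quarter-arc joining its top and right edges; so $(2,1)$ is that arc.

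Finally I would trace the cap that begins on the top edge of $(2,1)$: it runs through the corner arc $(1,1)$ and leaves rightward into $(1,2)$; since $(2,2)$ is a crossing the bottom edge of $(1,2)$ is occupied, which (for a boundary tile already carrying a connection point on its left edge) forces $(1,2)$ to be the quarter-arc joining its left and bottom edges, delivering the cap straight down onto the top edge of $(2,2)$. The arcs in $(2,1)$, $(1,1)$, $(1,2)$ therefore join the left edge of the crossing $(2,2)$ to its top edge while enclosing only the single grid point common to those four tiles, so they bound a monogon at that corner, and a mosaic Reidemeister Type~I move in the sense of \cite{LK2008} removes the crossing. If instead the occupied corner slot were the one below $(1,n)$, the mirror-image argument would yield the move at $(2,n-1)$, and the statement for the extreme columns and the bottom row then follows by rotating $M$. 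The step I expect to demand the most care is this last-paragraph bookkeeping, which must pin $(1,2)$ and $(2,1)$ down to a \emph{unique} tile rather than merely constrain them, since that is exactly what upgrades the parity obstruction into an honest local kink; the recurring lever that makes it go through is that a crossing tile forces a connection point on each of its four incident edges, ruling out the horizontal-line option for $(1,2)$ and every option but the intended quarter-arc for $(2,1)$.
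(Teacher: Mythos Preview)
The paper does not actually prove this observation: it is stated and accompanied only by a figure showing two illustrative examples, with no argument. Your proposal therefore supplies what the paper omits, and there is no ``paper's own proof'' to compare against.

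Your argument is correct. The parity step---that the arcs living entirely in the outer row are disjoint caps, so the total number of bottom connection points along that row is even---is exactly the right lever, and combined with the hypothesis it forces precisely one of the two outer corner slots to be occupied. The subsequent tile-pinning is also sound: the constraints ``on the outer boundary, hence no outward connection point'' together with ``adjacent to a crossing, hence a connection point on that shared edge,'' plus the built-in parity that every mosaic tile has $0$, $2$, or $4$ connection points, do determine $(1,1)$, $(2,1)$, and $(1,2)$ uniquely as the three quarter-arcs you name. The resulting $2\times 2$ block at the corner is then a kink removable by a mosaic Reidemeister~I move. One remark on wording: ``either corner'' in the statement means \emph{one of the two} corners, not both; your construction in fact identifies which one (the inner corner adjacent to the occupied outer slot), and this matches how the observation is invoked later in the paper (``either $I^1$ or $I^3$ can be changed'').
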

\begin{figure}[ht]
   \begin{center}
      \includegraphics[scale=1.05]{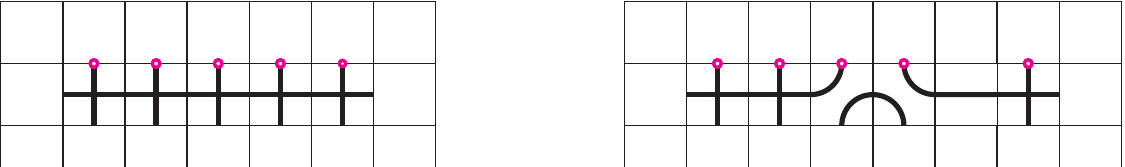}
      \label{fig:5by5}
      \caption{Two examples of knot mosaics with an odd number of connection points on the top boundary of the top row of the inner board; a connection point on the boundary is marked by red circle.}
   \end{center}
\end{figure}

\medskip

Armed with this quick introduction to knot mosaics, we are ready to determine the mosaic number for all prime knots with a crossing number of eight or fewer in the next section.  Before we proceed, it should be noted that there are many other questions to consider regarding knot mosaics besides finding the mosaic number.  For example, what is the fewest number of non-blank tiles needed to create a specific knot?  This could be known as the {\it tile number} of a knot.  With this in mind, if we allow knot mosaics to be rectangular, can some knots have a smaller tile number if they are presented in a rectangular, $m\times n$ configuration as opposed to a square configuration?  We will conclude this article with a number of other open questions about knot mosaics that you can consider and try to solve.\\

\section{Determining the mosaic number of small prime knots}
In this section, we will determine the mosaic number for all prime knots of eight or fewer crossings.  We are referring to these as ``small" prime knots.  We will see that for some knots, the mosaic number is ``obvious," while others take some considerable work.  We begin with knots\footnote{At this point we adopt the Alexander-Briggs notation for knots.  The number represents the number of crossings, while the subscript represents the order in the table as developed by Alexander-Briggs and extended by Rolfsen.  The knot $6_2$ is the second knot of six crossings in the Rolfsen knot table ~\cite{A2004}.} of an obvious mosaic number as shown in Table~\ref{T:Obvious}.\\

Why do these knots have obvious mosaic numbers?  As previously noted the trefoil knot, $3_1$, cannot fit on a $3 \times 3$ mosaic board, as such a board would only allow one crossing tile when $3_1$ requires at least three crossings.  Hence, the mosaic number for the trefoil is obvious.  Similarly, the knots $5_1$, $5_2$, and $6_2$ have more than four crossings, so they cannot fit on a $4\times 4$ mosaic board which only allows at most four crossing tiles.  In the Appendix, we have provided representations of these knots on $5\times 5$ mosaic boards, thus determining the mosaic number for these knots.  It should be noted that as the knots become larger, it is often difficult to determine whether a specific knot mosaic represents a given knot.  To check that a knot mosaic represents a specific knot, we used a software packaged called KnotScape~\cite{K1999} developed by Professor Morwen Thistlethawite which looks at the Dowker notation of a knot to determine the knot presented.  While KnotScape cannot determine all knots, it can determine small prime knots.  For more see Adams~\cite{A2004}.\\

Next we consider knots whose mosaic number is ``almost obvious."  At first glance, one may think that the figure-eight knot, $4_1$, should have a mosaic number of four.  Start with a $4\times 4$ mosaic with the four inner tiles being crossing tiles.  By Observation~\ref{O:TwoFold}, this $4\times 4$ mosaic can be completed in two ways as seen in Figure~\ref{F:4_1}.  However, the knot $4_1$ is known as an {\it alternating} knot.  An alternating knot is a knot with a projection that has crossings that alternate between over and under as one traverses around the knot in a fixed direction.  So, if we were to try to place $4_1$ on a $4\times 4$ mosaic, there would be four crossing tiles and they would have to alternate.  Thus $4_1$ cannot be placed on a $4 \times 4$ board.  In the Appendix we see a presentation of $4_1$ on a $5\times 5$ mosaic board, hence $m(4_1)=5$.

\begin{figure}[htp]
\begin{center}
\begin{minipage}{1.2in}
\begin{center}
\includegraphics[height=1in,clip]{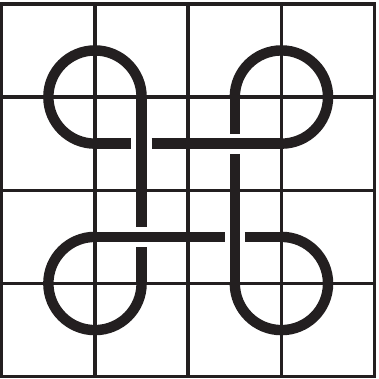}\\  \vspace{1.5mm}
Unknot
\end{center}
\end{minipage}
\mbox{}\hspace{.3in}
\begin{minipage}{1.8in}
\begin{center}
\includegraphics[height=1in,clip]{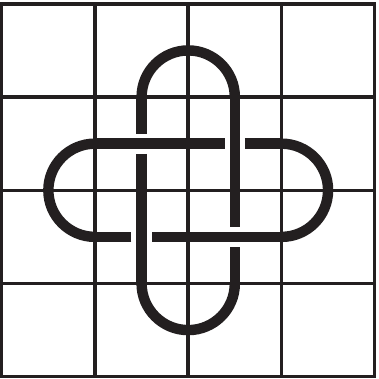}\\  \vspace{1.5mm}
King Solomon knot (link)
\end{center}
\end{minipage}
\caption{Possible four crossing mosaics on $4\times 4$ board.}\label{F:4_1}
\end{center}
\end{figure}

Another knot with an almost obvious mosaic number is $6_1$.  Figure~\ref{F:Compare6_1} first depicts a configuration of $6_1$ on a $6\times 6$ mosaic board.  However, by performing a move called a {\it flype} (see Adams~\cite{A2004}) we can fit $6_1$ on a  $5\times 5$ mosaic board.  It should be noted that this mosaic representation of $6_1$ has seven crossings instead of six.  Thus the mosaic number for $6_1$ is realized when the crossing is not.  It turns out that $6_1$ is not the only knot with such a property.  Ludwig, Evans, and Paat~\cite{L2013} created an infinite family of knots whose mosaic numbers were realized only when their crossing numbers were not.

\begin{figure}[htp]
\begin{center}
\begin{minipage}{1.3in}
\includegraphics[height=1.1in,clip]{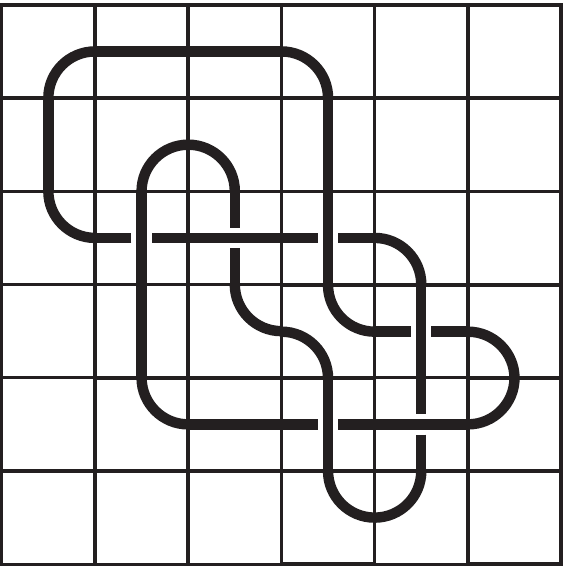}
\end{minipage}
\mbox{}\hspace{.6in}
\begin{minipage}{1.3in}
\includegraphics[height=1.1in,clip]{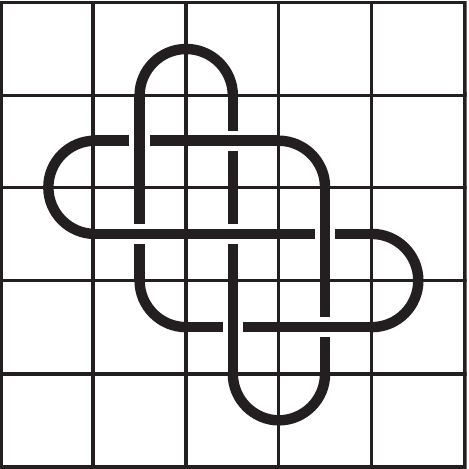}
\end{minipage}
\end{center}
\caption{The knot $6_1$ as a 6-mosaic and a 5-mosaic.}\label{F:Compare6_1}
\end{figure}

\medskip

At this point, we have determined the mosaic number for all six or fewer crossing knots except $6_3$.  Surprisingly, we will see that $6_3$ cannot fit on a $5\times 5$ board, even though such a board has nine possible positions to place crossing tiles.  \\

\medskip

\begin{thm}\label{T:6-3}
The mosaic number of the knot $6_3$ is six; that is $m(6_3)=6$.
\end{thm}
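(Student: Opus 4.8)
The plan is to show that $6_3$ cannot be placed on a $5\times 5$ board, which combined with an explicit $6\times 6$ mosaic (to be exhibited in the Appendix) gives $m(6_3)=6$. The starting point is that $6_3$ has crossing number six and is alternating, so any mosaic diagram realizing it must contain at least six crossing tiles. On a $5\times 5$ board the only positions available for crossing tiles are the nine inner-board cells $I_1,\dots,I_9$ of Figure~\ref{F:5by5}, so at least six of these nine cells must be crossing tiles, and hence at most three of them are non-crossing.

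First I would enumerate, up to the symmetries of the square (the dihedral group of order $8$) and up to the two-fold rule of Observation~\ref{O:TwoFold}, the possible locations of the (at most three) non-crossing inner tiles. The key structural input is Observation~\ref{O:Corner}: if a top, bottom, left, or right row of the inner board has both of its corner cells occupied by crossing tiles and the relevant outer boundary has an odd number of connection points, then a Reidemeister Type I move applies, which would reduce the crossing count below six and contradict that we are looking at a reduced six-crossing diagram. I would use this to argue that the two corner cells $I_1,I_3$ (top row), $I_1,I_7$ (left column), $I_3,I_9$ (right column), and $I_7,I_9$ (bottom row) cannot all be crossing tiles simultaneously in a way that survives as a reduced diagram; pushing this through forces non-crossing tiles into several corner positions of the inner board. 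Since we may spend at most three non-crossing tiles total, this is already very restrictive: essentially all four corners of the inner board want to be ``fixed'' but we only have three tiles to do it with, and the center tile $I_5$ plus edge-midpoint tiles $I_2,I_4,I_6,I_8$ then cannot all be crossings either once connectivity is imposed.

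Next, for the handful of surviving configurations I would use Observation~\ref{O:TwoFold} to write down the (at most two) suitably connected completions of each, and then check each resulting diagram directly. Because $6_3$ is alternating, I would invoke the alternating condition as a further filter: in any candidate $5\times 5$ mosaic with six or more crossing tiles, reading the over/under pattern around the single component must alternate, and I would show this is incompatible with the forced tile layouts — either the diagram fails to alternate, or it is not reduced (so by Reidemeister moves it collapses to fewer than six crossings and thus cannot be $6_3$), or it is a reduced alternating diagram with fewer than six crossings, or it is simply a diagram of a different knot/link, which can be confirmed via the Dowker code in KnotScape~\cite{K1999}. In each case we do not obtain $6_3$, so $m(6_3) \geq 6$.

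The main obstacle, I expect, is the case analysis in the middle step: even after reducing by the dihedral symmetries and by Observation~\ref{O:Corner}, one is left with several genuinely distinct placements of the non-crossing inner tiles, and for each one must carefully trace the knot/link type of both two-fold completions. The delicate part is ensuring completeness — that no placement of six crossing tiles among $I_1,\dots,I_9$ has been overlooked — and organizing the argument so that the ``alternating'' and ``reduced'' obstructions are applied correctly rather than double-counted. Once the $5\times 5$ board is ruled out, exhibiting $6_3$ on a $6\times 6$ board (again verified by KnotScape) completes the proof that $m(6_3)=6$.
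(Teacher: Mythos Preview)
Your proposal is essentially the paper's approach: both argue by exhaustive case analysis on the nine inner cells of a $5\times 5$ board, using Observation~\ref{O:Corner} to eliminate or reduce configurations and then identifying the survivors directly. The paper organizes its cases by how many of the four inner \emph{corner} cells $I^1,I^3,I^7,I^9$ are crossing tiles (one, two adjacent, two diagonal, three, four), with the later cases reducing to the earlier ones via Observation~\ref{O:Corner}; you organize dually by the placement of the non-crossing tiles. Either bookkeeping works.

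One caution: your alternating filter is over-stated. The statement ``in any candidate $5\times 5$ mosaic with six or more crossing tiles \dots\ the over/under pattern must alternate'' is false as written. What is true (and what the paper uses in its Case~1) is that a \emph{minimal} diagram of an alternating knot is alternating; so a six-crossing diagram of $6_3$ must alternate, but a seven-crossing diagram need not. The paper confronts exactly this in Sub-case~2(b) with seven crossings: the alternating arrangement gives $7_4$, while the non-alternating arrangements are handled pair-by-pair --- certain adjacent non-alternating pairs force a Reidemeister~II reduction below six crossings, and the remaining possibilities are identified as $6_1$ or $3_1$. Your KnotScape catch-all would ultimately cover this, but as written the logic conflates the six-crossing case (where alternation is a genuine constraint) with the seven-crossing case (where it is not), and that is precisely where the delicate work lies.
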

\begin{proof}
Since $6_3$ has six crossings, we know that $m(6_3)\geq 5$.  In the Appendix we see a representation of $6_3$ on a $6\times 6$ mosaic board, so $m(6_3)\leq 6$.  This mean $m(6_3)=5$ or $m(6_3)=6$.  We now argue $m(6_3)=6$. \\ 

Assume to the contrary that $m(6_3)= 5$. By the definition of the mosaic number, this implies that there is some $5\times 5$ mosaic $M$ that represents $6_3$. We will show via a case analysis that regardless of how the crossing tiles are arranged on $M$, the resulting knot is in fact \textit{not $6_3$}. This will give us a contradiction, implying that $m(6_3)=6$. \\

In order to help with the case analysis, we label the nine inner tiles of the $5\times 5$ mosaic $I^1$ -- $I^9$, as depicted in Figure~\ref{F:5by5}.  The $6_3$ knot uses at least six crossing tiles.  Since $6_3$ has a crossing number of six, by the Pigeon Hole Principle at least one of the four corner inner tiles, $I^1$, $I^3$, $I^7$, or $I^9$ must be a crossing tile.  
By rotations, it is enough to consider the four cases that are depicted in Figure~\ref{F:FourCases}. Note that in Figure~\ref{F:FourCases}, gray tiles describe non-crossing tiles and white tiles could be crossing tiles (but they do not have to be).
\begin{figure}[ht]
   \begin{center}
      \includegraphics[scale=0.9]{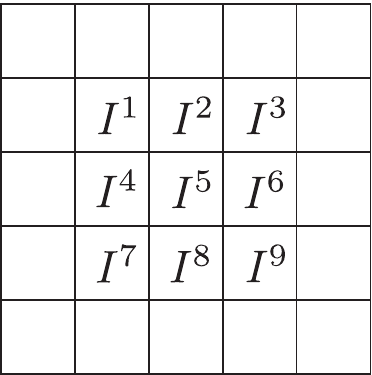}
      \caption{The $5\times 5$ mosaic board with inner-tiles $I^1$ -- $I^9$.}
   \label{F:5by5}
   \end{center}
\end{figure}

\begin{figure}[htbp]
\begin{center}
\includegraphics[width=4.5in]{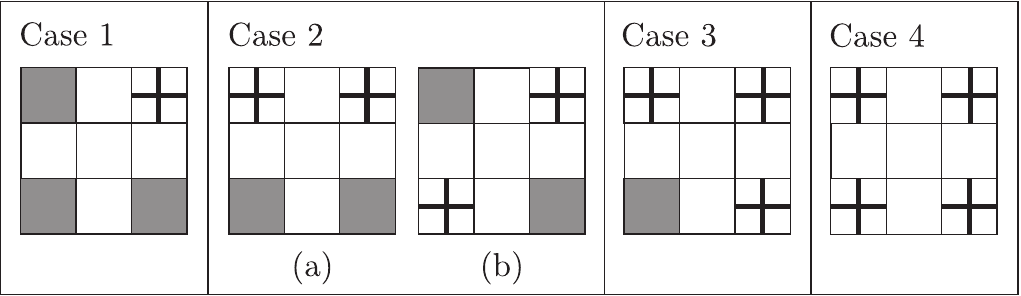}
\caption{Four different cases for placing crossing tiles on the inner corner tiles for a $5\times 5$ mosaic.}
\label{F:FourCases}
\end{center}
\end{figure}

\noindent {\bf Case 1}: Suppose that  $I^3$ is a crossing tile, while $I^1, I^7$, and $I^9$ are not. Thus $I^2, I^4, I^5, I^6,$ and $I^8$ are all crossing tiles since $M$ has exactly 6 crossing tiles. Every reduced projection of an alternating knot is alternating, and since $M$ represents the alternating knot $6_3$, the crossings on $M$ must alternate.  A quick inspection shows that to suitably connect the inner tiles of $M$ we would need to ensure that (i) $I^1, I^7$, and $I^9$ are not crossing tiles, (ii) the crossings are alternating, and (iii) there are no easily removed crossings, however this results in a mosaic that represents $6_2$, as seen in Figure~\ref{Fig:Ex1}.  Therefore, $6_3$ cannot be constructed in Case 1.

\begin{figure}[htbp]
\begin{center}
\includegraphics[width=1.2in]{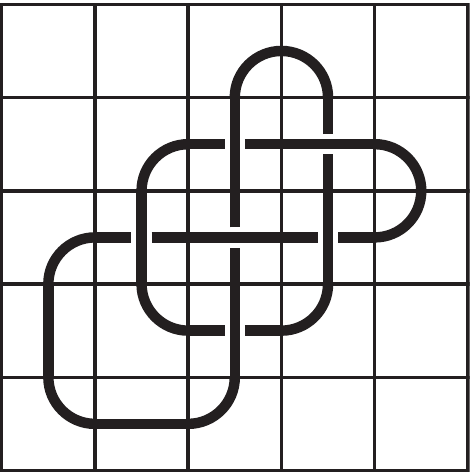}
\caption{$6_2$ results from Case 1.}
\label{Fig:Ex1}
\end{center} 
\end{figure}


\noindent  {\bf Case 2}: For the case when $M$ has two corner inner tiles that are crossings, we require two sub-cases.\\

\textbf{Sub-case 2(a)}: Suppose that  $I^1$ and $I^3$ are crossing tiles, while $I^7$ and $I^9$ are not. If $I^2$ is a crossing tile then Observation~\ref{O:Corner} may be applied to the top inner row of $M$. Applying this observaiton will either change $I^1$ or $I^3$ to a non-crossing tile.  Without loss of generalization, suppose that $I^1$ is changed. Notice that $M$ now satisfies Case 1, and from the previous analysis, $M$ does not represent $6_3$. Therefore we may assume that $I^2$ is not a crossing tile. 
Since $M$ has at least 6 crossing tiles, and $I^2, I^7,$ and $I^9$ are not crossing tiles, the remaining 6 inner tiles must be crossing tiles.  Then $I^2$ has $4$ connection points. By Observation~\ref{O:Corner} either $I^1$ or $I^3$ can be changed to a non-crossing tile. Again $M$ falls into Case 1 and does not represent $6_3$. \\

{\bf Sub-case 2(b)}   Suppose that  $I^3$ and $I^7$ are crossing tiles while $I^1$ and $I^9$ are not. \\

\begin{claim}
In Sub-case 2(b), if $M$ has 6 crossings tiles then $M$ cannot be $6_3$.
\end{claim}
\begin{proof}[Proof of Claim]
Assume to the contrary that $M$ only has $6$ crossing tiles.  Then exactly one of $\{I^2, I^4, I^5, I^6, I^8\}$ is a non-crossing tile.  Up to rotation and reflection, we only need to consider situations (i) and (ii), where
\begin{itemize}
\item[(i)] $I^3, I^4, I^5, I^6, I^7$ and $I^8$ are the only crossing tiles,
\item[(ii)] $I^2, I^3, I^4, I^6, I^7$ and $I^8$ are the only crossing tiles.
\end{itemize}
There are two crossing arrangements up to mirror to describe case (ii) as shown in the last two images in Figure~\ref{F:Case2(b)inner}.


\begin{figure}[ht]
   \begin{center}
      \includegraphics[scale=.6]{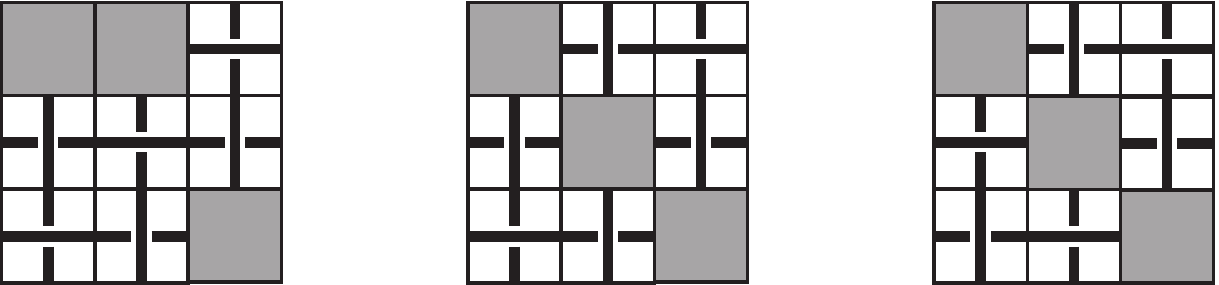}
      \caption{Possible configurations of six crossing tiles under Sub-case 2(b)}
      \label{F:Case2(b)inner}
   \end{center}
\end{figure}

\medskip

Since $M$ has 6 crossings, the corner inner-tiles $I^3$ and $I^7$ cannot be changed from crossing tiles. With this in mind, suitably connecting the crossing tiles in Figure~\ref{F:Case2(b)inner} so that a knot (and not a 2-component link) is created leads to $6_2$ or $3_1\#3_1$ (see Figure~\ref{F:Case2(b)knots}).  This is a contradiction, proving our claim.

\begin{figure}[ht]
   \begin{center}
      \includegraphics[scale=.65]{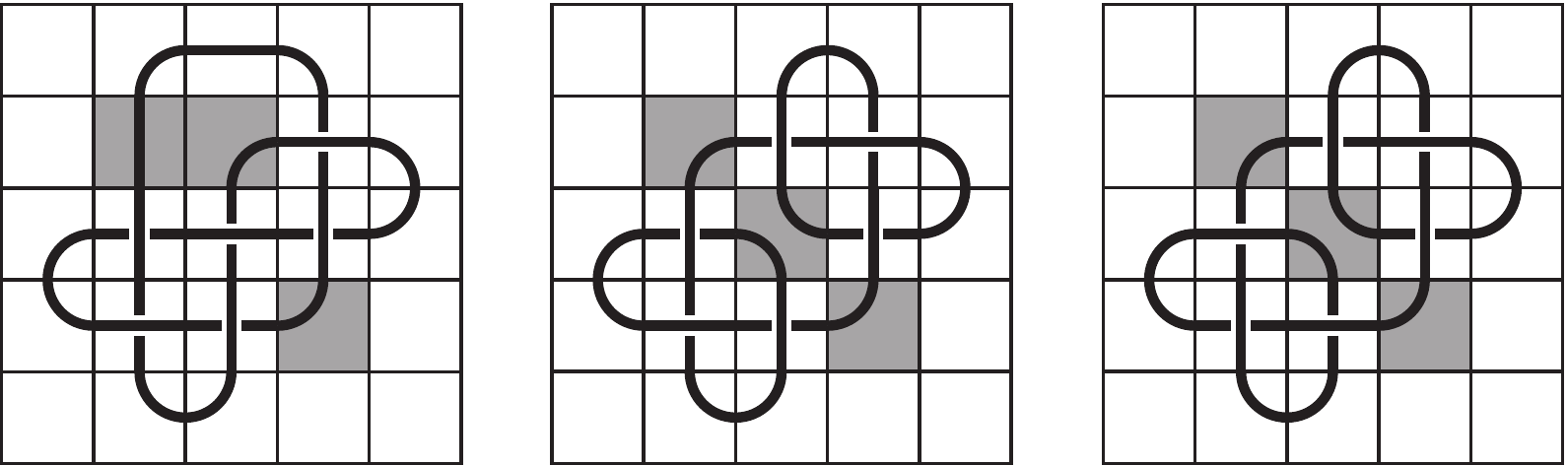}
      \caption{Knot mosaic configurations of $6_2$ and $3_1\#3_1$.}
      \label{F:Case2(b)knots}
   \end{center}
\end{figure}

\end{proof}

\begin{claim}
In Sub-case 2(b), if $M$ has 7 crossings tiles then $M$ cannot be $6_3$.
\end{claim}

\begin{proof}[Proof of Claim]
If the crossing tiles are alternating, then $M$ represents $7_4$, contradicting that $M$ is $6_3$.  So assume that $M$ has 7 crossings and is non-alternating.\\

Observe that if any of the pairs $\{I^2, I^3\}$, $\{I^3, I^6\}$, $\{I^4, I^7\}$, or $\{I^7, I^8\}$ are non-alternating, then a Type II Reidemeister move is present, and $M$ can be reduced to five crossings. However, this contradicts that $M$ is $6_3$. So each pair $\{I^2, I^3\}$, $\{I^3, I^5\}$, $\{I^4, I^7\}$, and $\{I^7, I^8\}$ is alternating. Since $M$ is non-alternating, at least one of the pairs $\{I^2, I^5\}$, $\{I^6, I^5\}$, $\{I^4, I^5\}$, or $\{I^5, I^8\}$ is non-alternating. Without loss of generality, assume $\{I^2, I^5\}$ creates a pair of non-alternating crossings. Then, up to ambient isotopy, $M$ is $6_1$ or $3_1$, as seen in Figure~\ref{fig_6131}. Therefore Sub-case 2(b) does not result in $6_3$.

\begin{figure}[ht]
   \begin{center}
      \includegraphics[scale=.65]{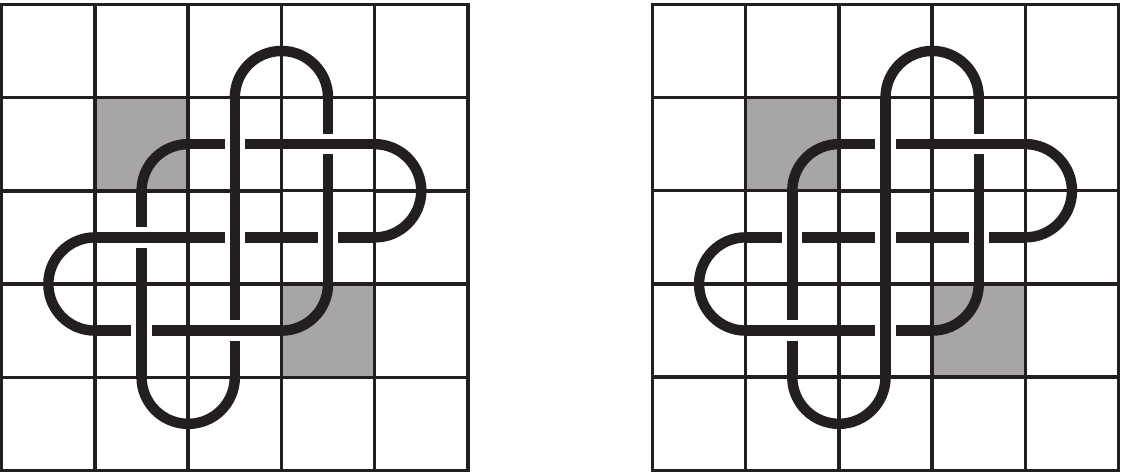}
      \vspace{3mm}
      \caption{$6_1$ and $3_1$, respectively.}
      \label{fig_6131}
   \end{center}
\end{figure}

\vspace{-8mm}

\end{proof}


\noindent\textbf{Case 3:} Suppose that  $I^1,I^3$ and $I^9$ are crossing tiles, and $I^7$ is not. Since $M$ has at least 6 crossings, there must be at least 3 more crossing tiles on the board. \\

Observe that if either $I^2$ or $I^6$ are crossing tiles then Observation~\ref{O:Corner} may be applied to the top inner row or the right inner column, respectively. As a result of this, one of the crossings on $I^1,I^3$ and $I^9$ can be changed to a non-crossing tile, leaving only two corner inner-tiles that are crossings. This reverts to Case 2 showing that $M$ would not represent $6_3$. Therefore, we may assume that neither $I^2$ nor $I^6$ are crossing tiles.\\

With $I^2$ and $I^6$ eliminated as crossing tiles, $I^1, I^3, I^4, I^5, I^8,$ and $I^9$ must all be crossing tiles. Then by Observation~\ref{O:Corner}, either $I^3$ or $I^9$ can be changed from a crossing tile to a non-crossing tile. This again reverts to Case 2 showing that $M$ would not represent $6_3$.\\

\noindent\textbf{Case 4:} Suppose that  $I^1, I^3, I^7$ and $I^9$ are crossing tiles. Note that at least one of the tiles in the set $\{I^2, I^4,I^6,I^8\}$ must be a crossing tile. This means Observation~\ref{O:Corner} applies to some row or column of $M$, and $M$ can be reduced to Case 3. Hence $M$ does not represent $6_3$.\\

By the above four cases, we see that the $6_3$ cannot be placed on a $5\times 5$ mosaic. Hence, by the figure for $6_3$ in the Appendix, we see that the mosaic number of $6_3$ is six; that is $m(6_3)=6$.

\end{proof}

\medskip

We next consider the seven-crossing knots.  As seen above, the knot $7_4$ can be placed on a $5\times 5$ mosaic board.  We formalize this result in the following proposition as well as establish the mosaic number for the other seven-crossing knots.\\

\begin{thm}\label{T:7-4}
The mosaic number of $7_4$ is five, that is $m(7_4)=5$.  Moreover, $7_4$ is the only seven-crossing prime knot with mosaic number five; the remaining seven-crossing prime knots have mosaic number six.
\end{thm}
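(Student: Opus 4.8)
The plan is to prove the two assertions separately, recycling as much of the case analysis from the proof of Theorem~\ref{T:6-3} as possible. For the value $m(7_4)=5$: every seven-crossing prime knot $K$ has crossing number seven and hence needs at least seven crossing tiles, whereas a board of size $n\le 4$ has at most four inner tiles (the only tiles that can carry a crossing), so $m(K)\ge 5$ for every seven-crossing prime knot $K$; for the matching upper bound I would simply point to the $5\times5$ mosaic of $7_4$ already produced inside the proof of Theorem~\ref{T:6-3} --- in Sub-case 2(b), with $I^1$ and $I^9$ the only non-crossing inner tiles and the seven crossings alternating, the mosaic is $7_4$ (this mosaic is also displayed in the Appendix). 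Hence $m(7_4)=5$.

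Next I would show that none of $7_1,7_2,7_3,7_5,7_6,7_7$ fits on a $5\times 5$ board. Here I would suppose $M$ is such a mosaic representing a seven-crossing prime knot $K$; then at least seven of the nine inner tiles of $M$ are crossing tiles. The combinatorial core of the argument is to track the four ``outer lines'' of the inner board --- the top row $I^1I^2I^3$, bottom row $I^7I^8I^9$, left column $I^1I^4I^7$, and right column $I^3I^6I^9$ --- and to notice that a non-crossing corner tile lies on two of them, a non-crossing edge-midpoint tile on one, and the center tile $I^5$ on none. Consequently: if $M$ has eight or nine crossing tiles, then at least two outer lines consist entirely of crossing tiles; and if $M$ has exactly seven crossing tiles, then either the two non-crossing inner tiles are a pair of diagonally opposite corners (the Sub-case 2(b) configuration of Theorem~\ref{T:6-3}), or at least one outer line consists entirely of crossing tiles. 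Whenever an outer line consists of three crossing tiles its outer boundary carries three --- an odd number --- of connection points, so Observation~\ref{O:Corner} applies and a Type~I move turns one of its corner tiles into a non-crossing tile, decreasing the crossing-tile count of $M$ by one while leaving the knot type $K$ unchanged, exactly as in the proof of Theorem~\ref{T:6-3}.

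Iterating this reduction, $M$ is driven either to a mosaic of $K$ with fewer than seven crossing tiles --- impossible, since $K$ has crossing number seven and so every diagram of $K$ has at least seven crossings --- or to a mosaic of $K$ with exactly seven crossing tiles in the Sub-case 2(b) configuration. But the two Claims in the proof of Theorem~\ref{T:6-3} establish that a seven-crossing-tile mosaic in that configuration represents $7_4$ when its crossings alternate and $6_1$ or $3_1$ otherwise. Hence $K=7_4$, contradicting $K\in\{7_1,7_2,7_3,7_5,7_6,7_7\}$. Therefore each of these six knots has mosaic number at least six, and together with the $6\times6$ mosaics exhibited in the Appendix, each has mosaic number exactly six.

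I expect the main obstacle to be the case-checking behind the ``outer line'' claim for exactly seven crossing tiles: one has to verify, over all placements of the two non-crossing inner tiles up to the dihedral symmetry of the $5\times5$ board, that the diagonally-opposite-corner placement is the unique one in which no outer line is made entirely of crossing tiles, and one must check that each Type~I reduction genuinely returns a valid $5\times5$ mosaic of the same knot (as is implicit in the $6_3$ argument) so that the iteration really does terminate. As elsewhere in the paper, a secondary point is that the identification of the alternating Sub-case 2(b) mosaic as $7_4$ rather than another seven-crossing knot is confirmed with KnotScape.
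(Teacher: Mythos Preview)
Your proposal is correct and follows essentially the same approach as the paper's own proof: reduce any $5\times 5$ mosaic of a seven-crossing knot via Observation~\ref{O:Corner} until only seven crossing tiles remain in an irreducible configuration, then identify that unique configuration (the diagonal-corner Sub-case~2(b) layout) as $7_4$. The paper compresses this into a few lines---asserting that at most seven non-reducible crossing tiles fit on a $5\times5$ board and invoking the fact that all seven-crossing prime knots are alternating to pin down the single alternating seven-tile pattern of Figure~\ref{F:Alt_7}---whereas you spell out the reduction via your outer-line bookkeeping and cite the Claims inside Theorem~\ref{T:6-3}; the substance is the same.
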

\begin{proof}
We have already seen via Observation~\ref{O:Corner} that at most seven crossing tiles can be placed on a $5 \times 5$ board without reduction via a Reidemeister Type I move.  Moreover, since all seven crossing knots are alternating, there is only one way to place seven alternating crossing tiles up to mirror, reflection, and rotation as depicted in Figure~\ref{F:Alt_7}.  When this arrangement is suitably connected, the only knot resulting is $7_4$.  Therefore $m(7_4)=5$ and all other seven-crossing knots have mosaic number six as depicted in the Appendix.

\begin{figure}[ht]
   \begin{center}
      \includegraphics[scale=.65]{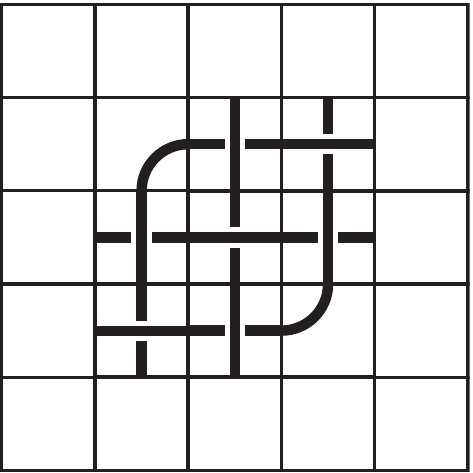}
      \vspace{3mm}
      \caption{Seven alternating tiles on a $5\times5$ board.}
      \label{F:Alt_7}
   \end{center}
\end{figure}
\end{proof}

Next we consider the eight-crossing knots.  Let $K$ be a knot of eight crossings. By the proof of Theorem~\ref{T:6-3}, we know that $K$ cannot fit on a $5\times 5$ mosaic board. This means $m(K)\geq 6$. Furthermore there exists a knot mosaic of $K$ on a $6\times 6$ board (see the Appendix). This implies $m(K)=6$.\\

Given the above arguments, we see that the mosaic number of the eight crossing knots is greater than five.  By the Appendix and use of KnotScape, we see that the mosaic number of all eight crossing knots is six.  We summarize our findings in the Table~\ref{T:Obvious}. For each knot $K$ with at most 8 crossings, the Appendix includes a mosaic of size $m(K)$ representing $K$.  \\

\begin{table}[htp]
\renewcommand\arraystretch{1.45}
\begin{center}
\begin{tabular}{|c|c||c|c||c|c||c|c||c|c||c|c|}\hline
$K$ & $m(K)$ & $K$ & $m(K)$ & $K$ & $m(K)$ & $K$ & $m(K)$ & $K$ & $m(K)$ & $K$ & $m(K)$\\ \hline
$0_1$ & $2^{\dag}$ & $6_2$ & $5^{\dag}$ & $7_5$ & $6^{\sharp}$ & $8_4$ & 6 & $8_{10}$ & 6 & $8_{16}$ & 6\\ \hline
$3_1$ & $4^{\dag}$ & $6_3$ & $6^{\natural}$ & $7_6$ & $6^{\sharp}$ & $8_5$ & 6 & $8_{11}$ & 6 & $8_{17}$ & 6\\ \hline
$4_1$ & $5^{\ddag}$ & $7_1$ & $6^{\sharp}$ & $7_7$ & $6^{\sharp}$ & $8_6$ & 6 & $8_{12}$ & 6 & $8_{18}$ & 6\\ \hline
$5_1$ & $5^{\dag}$ & $7_2$ & $6^{\sharp}$ & $8_1$ & 6  &  $8_7$ & 6 & $8_{13}$ & 6 & $8_{19}$ & 6\\ \hline
$5_2$ & $5^{\dag}$ & $7_3$ & $6^{\sharp}$ & $8_2$ & 6 & $8_8$ & 6 &  $8_{14}$ & 6 &  $8_{20}$ & 6\\ \hline 
$6_1$ & $5^{\dag}$ & $7_4$ & $5^{\sharp}$ & $8_3$ & 6 & $8_9$ & 6 &  $8_{15}$ & 6 & $8_{21}$ & 6\\ \hline 
\end{tabular}
\end{center}
\vspace{3mm}
\caption{Mosaic number of knots with up to $8$ crossing}
\label{T:Obvious}
\end{table}%

\vspace{-4mm}
\begin{itemize}
\item[$\dag$] Obvious. 
\item[$\ddag$] From Observation~\ref{O:EvenAll}, we have $m(K)\geq 5$.
\item[$\natural$] by Theorem~\ref{T:6-3}
\item[$\sharp$] by Theorem~\ref{T:7-4}
\end{itemize}

\medskip

\section{Further work}
We conclude with a number of open questions that would make good projects for undergraduate research.  To begin, often times in mathematics when something is proved for the first time, the proof may not be very elegant.  For example, Newton's ``proofs" of various facts in calculus look much different than the proofs you would find in typical calculus book today.  Although Theorem~\ref{T:6-3} proved $m(6_3)=6$, could the proof be shortened?

\begin{question}
Is there a more direct proof of $m(6_3)=6$?
\end{question}

It is often interesting to see how various knot invariants compare.  For example, in 2009, Ludwig, Paat, and Shapiro showed that the mosaic number and crossing number of a knot can be related in the following way:
$$\lceil \sqrt{c(k)} \rceil +3\leq m(k).$$
Moreover, in 2014 Lee et al.~\cite{L2014} showed 
$$m(K)\leq c(k)+1.$$

\medskip

\begin{question}
Do tighter upper or lower bounds exist on the mosaic number of a knot using the crossing number of the knot?
\end{question}

\medskip

In 2013, Ludwig, Evans, and Paat~\cite{L2013} created an infinite family of knots whose mosaic number is realized when the crossing number is not.  We have seen that $6_1$ is such a knot.  The mosaic number for $6_1$ is five, but in that projection, the number of crossing tiles is seven.  To realize the crossing number for $6_1$ it has to be projected on a $6\times 6$ mosaic board. In general, Ludwig et al. created a family of knots whose mosaic number was realized on an $n\times n$ mosaic board with $n$ odd, $n\geq 5$ and whose crossing number was realized on an $(n+1)\times (n+1)$ mosaic board.

\medskip

\begin{question}
Does there exist an infinite family of knots with mosaic number $n\times n$ whose crossing number is realized on an $(n+k)\times (n+k)$ for each $k\geq 2$?
\end{question}

\medskip

Working with undergraduates in 1997, Adams et al.~\cite{A1997} proved the surprising fact that the composition of $n$ trefoils has stick number exactly $2n+4$.  Is there a similar result for knot mosaics?

\medskip

\begin{question}
What is the mosaic number of the composition of $n$ trefoil knots?
\end{question}

\medskip

We have briefly touched on mosaic Reidemeister moves.  It is often the case that to make these moves, one has to add a certain number of rows and columns to the mosaic board to provide enough room for the moves to occur (see Kuriya and Shebab~\cite{KS2014} for example). Is such an expansion always necessary?

\medskip

\begin{question}
Let $M_1$ and $M_2$ be $n$-mosaics of that represent the same knot.  Is there a set of mosaic Reidemeister moves from $M_1$ to $M_2$ on a mosaic board of size $n\times n$?
\end{question}

\medskip

Another well-studied area of knot theory is braid diagrams.  In 1923 J.W. Alexander proved that every knot or link has a closed braid representation~\cite{A1923}.  From Figure~\ref{F:Braid}, we see that braids appear ``rectangular" in nature which leads to the next question.

\begin{figure}[ht]
   \begin{center}
      \includegraphics[scale=.5]{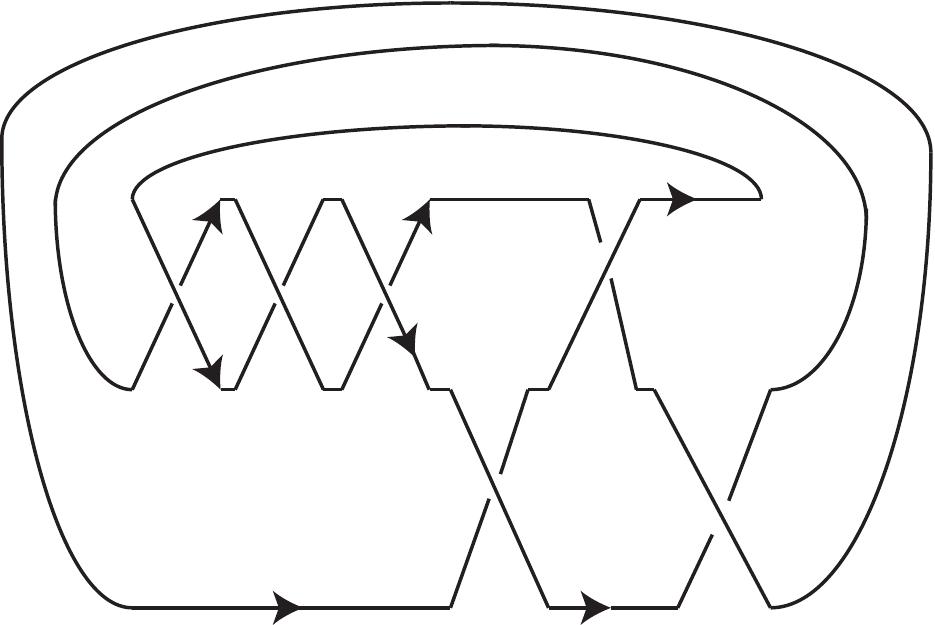}
      \caption{A braid representation of the knot $5_2$.}
      \label{F:Braid}
   \end{center}
\end{figure}

\medskip

\begin{question}
If we allow mosaics to be rectangular, what is the smallest rectangular board on which we can place a knot?
\end{question}

\medskip

\begin{definition}
Let $t(K)$ denote the tile number of a knot $K$; the fewest non-$T_0$ tiles needed to construct a given knot.
\end{definition}

\medskip

\begin{question}
What is the tile number of the knots of 10 or fewer crossings?
\end{question}

\medskip

\begin{question}
Is there an infinite family of knots whose tile number can be determined?
\end{question}

\medskip
Lastly, it should be noted that in the Appendix, the knots $8_3$, $8_6$,  $8_9$ and $8_{11}$ are depicted with nine or more crossing tiles.  Therefore, the crossing number of these knots are not realized in this representation.  

\medskip
\begin{question}
Does there exist a representation of $8_3$ (respectively $8_6$,  $8_9$ or $8_{11}$) on a $6\times 6$ board with only eight crossing tiles?
\end{question}
\medskip

These are just a few of the questions about knot mosaics that one can consider.  For those interested in studying knot mosaics using a tangible manipulative, visit Thingiverse.com to 3-D print your very own knot mosaics!

\vspace{8mm}


\section*{Appendix}

\label{A:AppendixA}
\begin{table}[ht]
\centering
\begin{tabular}{cccccc}
\includegraphics[width=.145\textwidth]{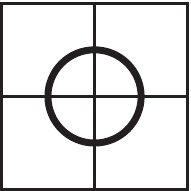}&
\includegraphics[width=.145\textwidth]{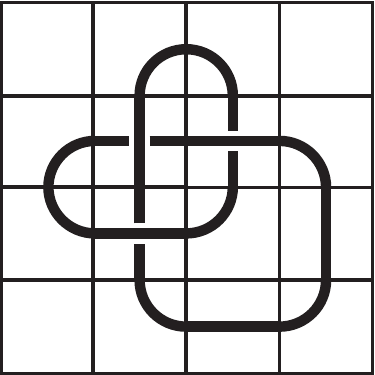}& 
\includegraphics[width=.145\textwidth]{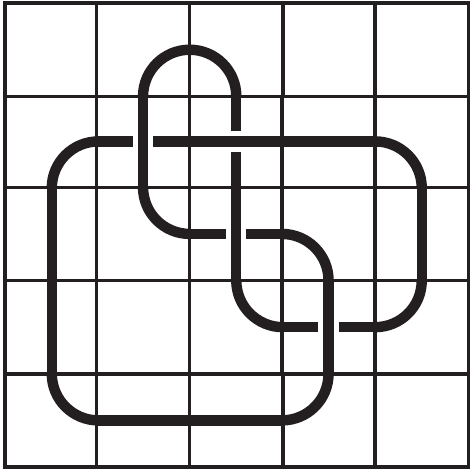}&
\includegraphics[width=.145\textwidth]{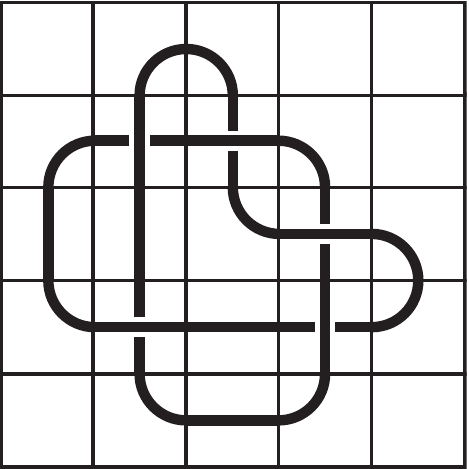}&
\includegraphics[width=.145\textwidth]{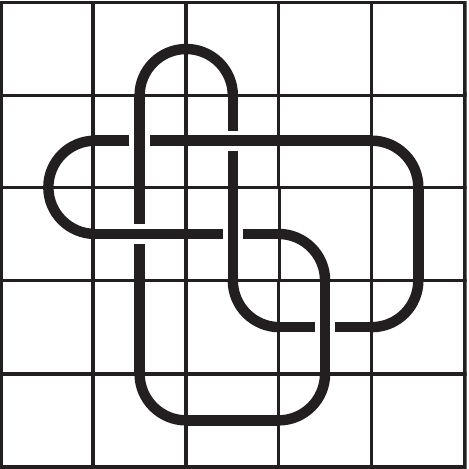}&
\includegraphics[width=.145\textwidth]{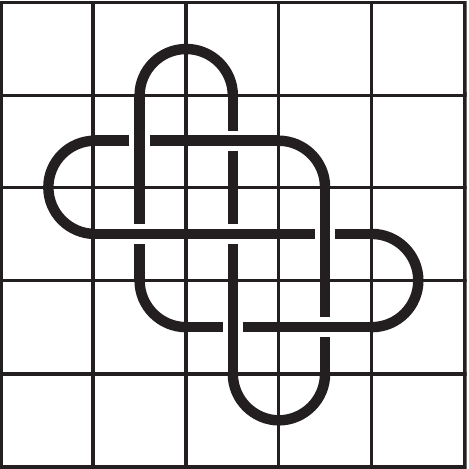} \\
$0_1$&$3_1$&$4_1$&$5_1$&$5_2$&$6_1$\\
&&&&&\\
\end{tabular}
\end{table}

\begin{table}
\centering
\begin{tabular}{cccccc}
\includegraphics[width=.145\textwidth]{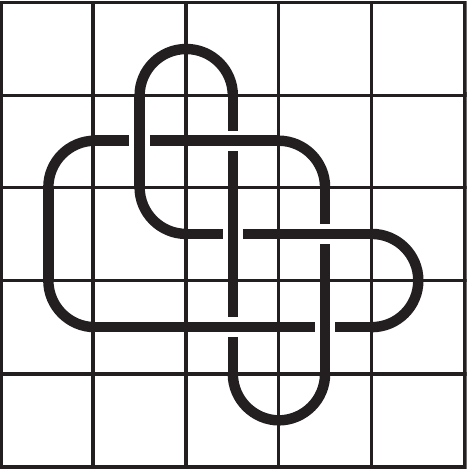}&
\includegraphics[width=.145\textwidth]{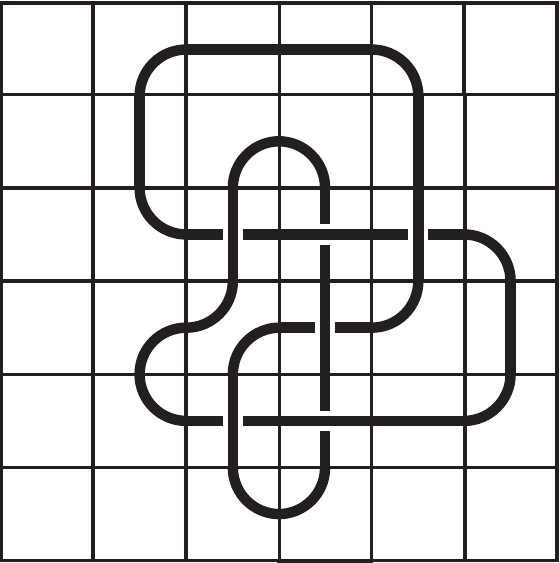}& 
\includegraphics[width=.145\textwidth]{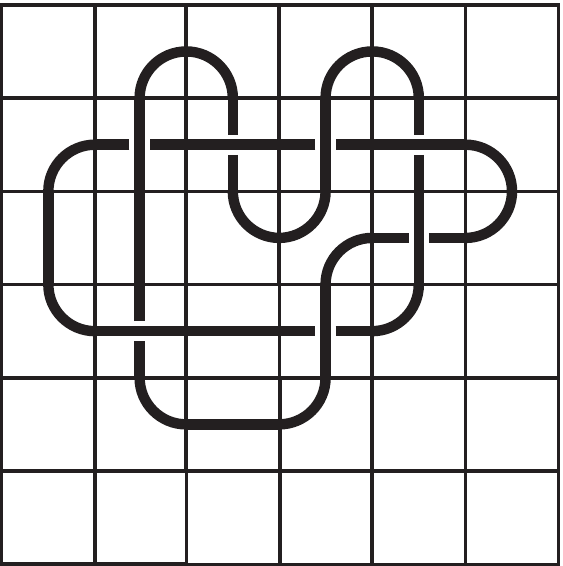}&
\includegraphics[width=.145\textwidth]{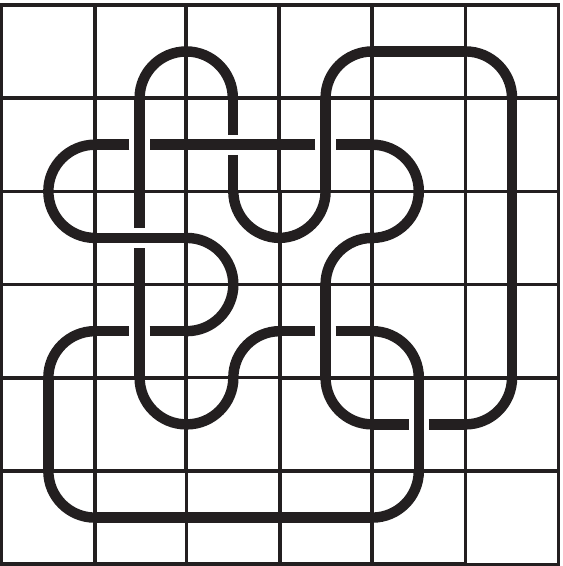}&
\includegraphics[width=.145\textwidth]{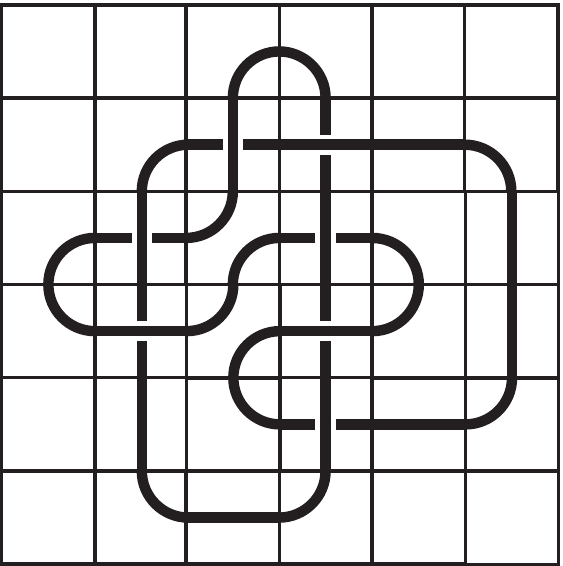}&
\includegraphics[width=.145\textwidth]{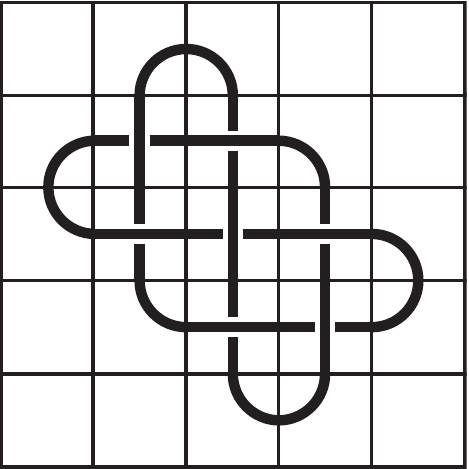}\\
$6_2$&$6_3$&$7_1$&$7_2$&$7_3$&$7_4$\\
&&&&&\\
\includegraphics[width=.145\textwidth]{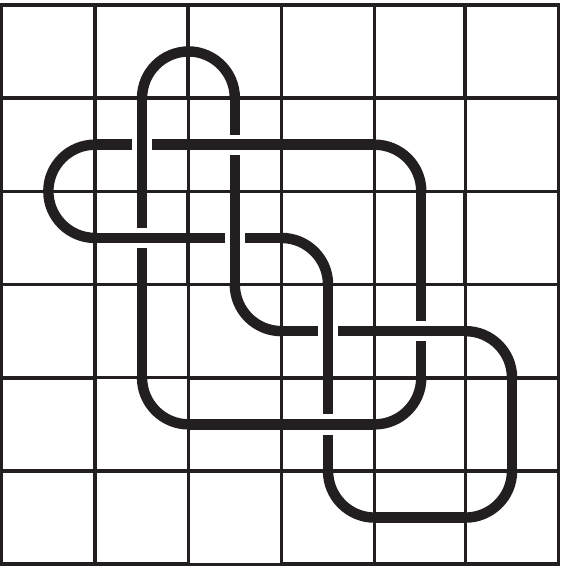}&
\includegraphics[width=.145\textwidth]{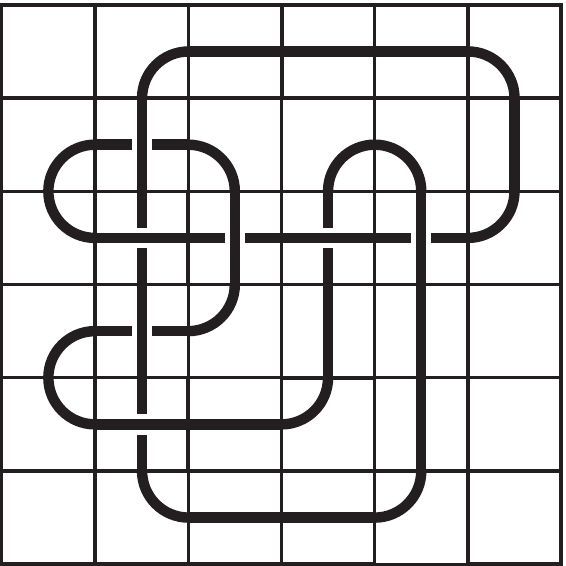}& 
\includegraphics[width=.145\textwidth]{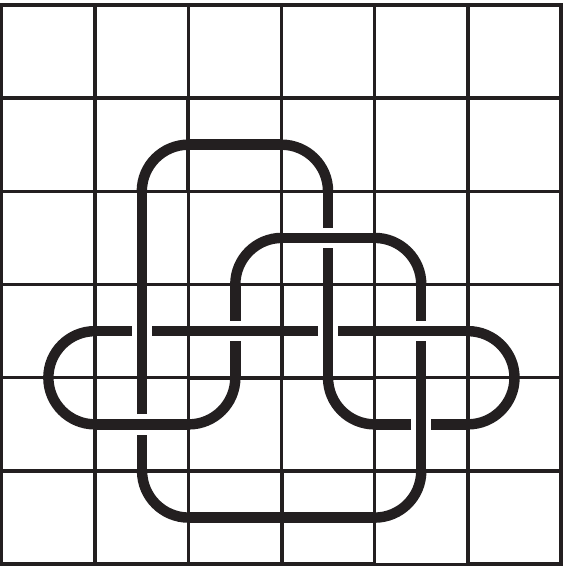}&
\includegraphics[width=.145\textwidth]{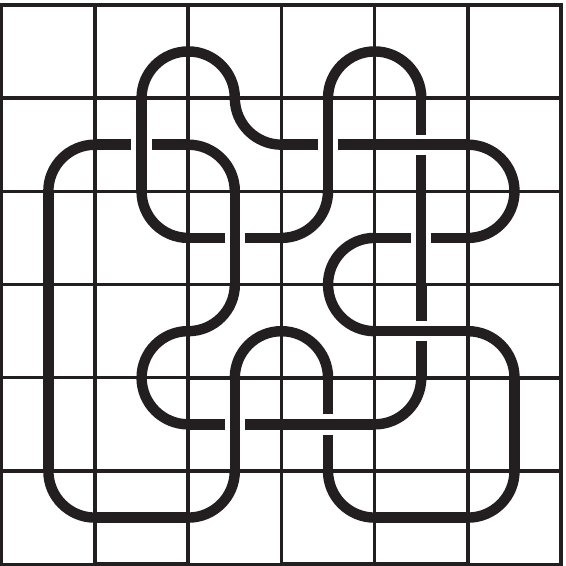}&
\includegraphics[width=.145\textwidth]{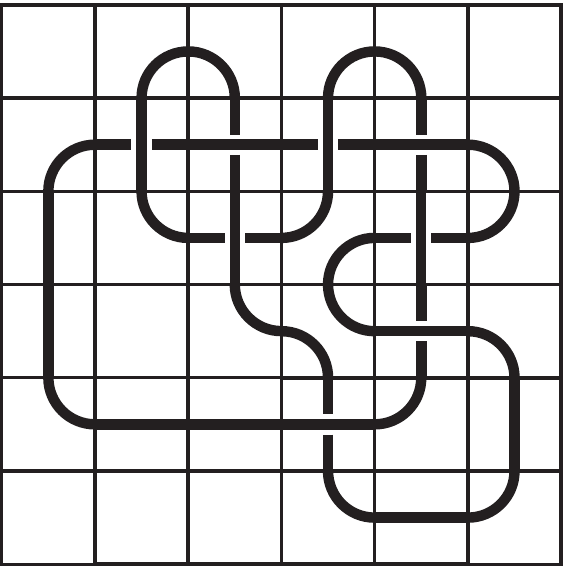}&
\includegraphics[width=.145\textwidth]{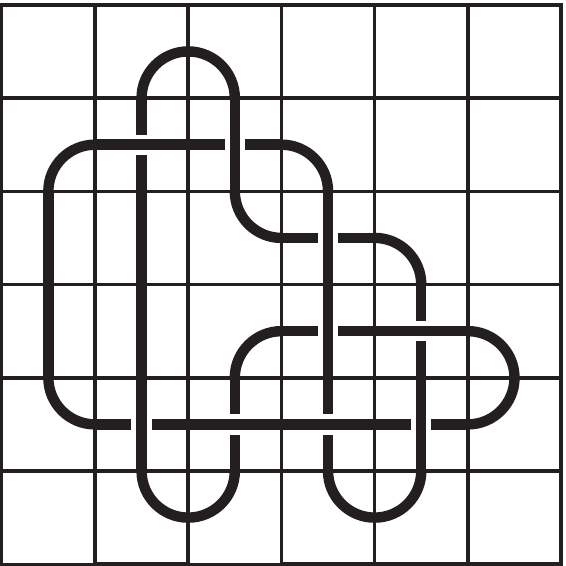}\\
$7_5$&$7_6$&$7_7$&$8_1$&$8_2$&$8_3$\\
&&&&&\\
\includegraphics[width=.145\textwidth]{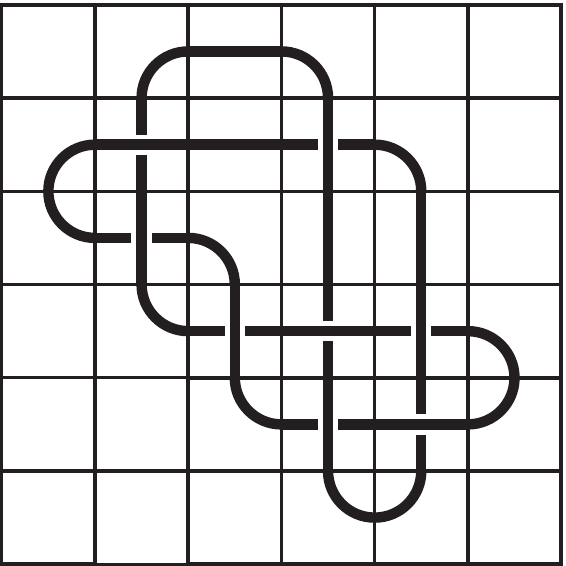}&
\includegraphics[width=.145\textwidth]{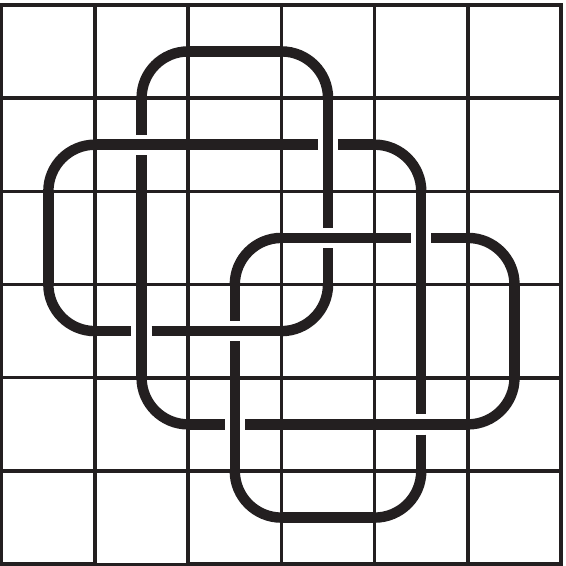}& 
\includegraphics[width=.145\textwidth]{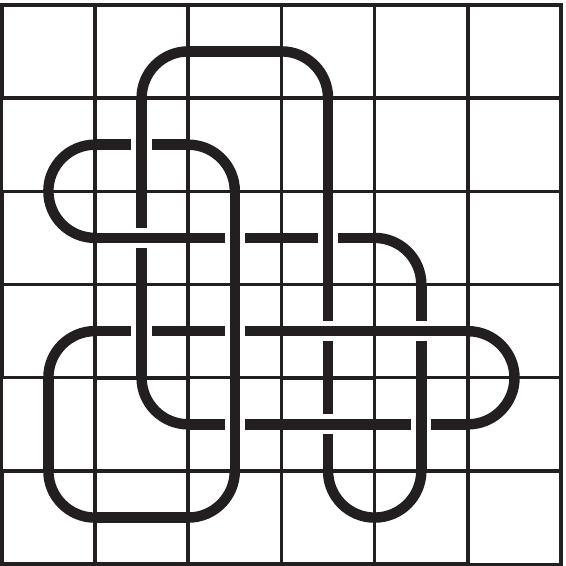}&
\includegraphics[width=.145\textwidth]{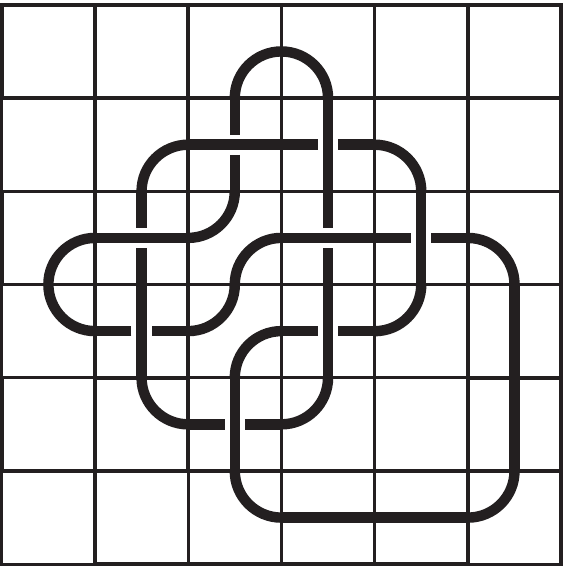}&
\includegraphics[width=.145\textwidth]{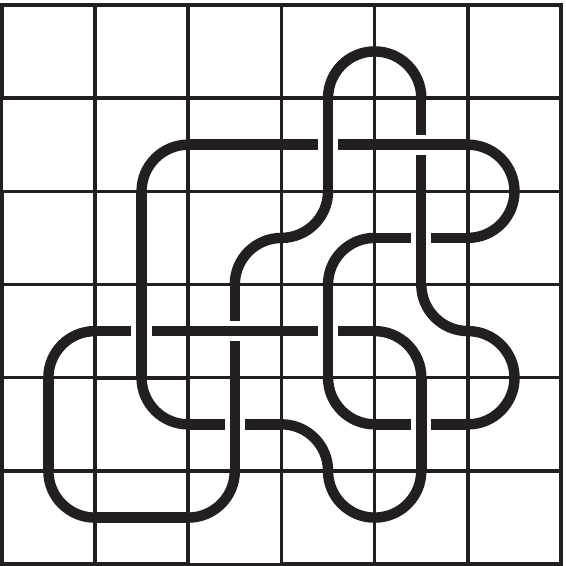}&
\includegraphics[width=.145\textwidth]{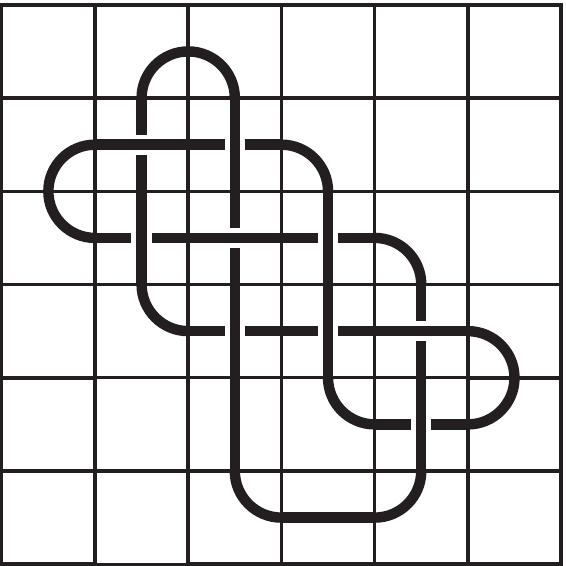}\\
$8_4$&$8_5$&$8_6$&$8_7$&$8_8$&$8_9$\\
&&&&&\\
\includegraphics[width=.145\textwidth]{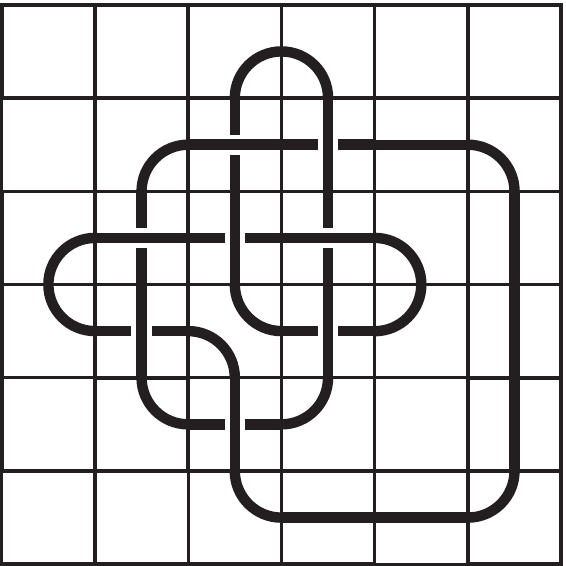}&
\includegraphics[width=.145\textwidth]{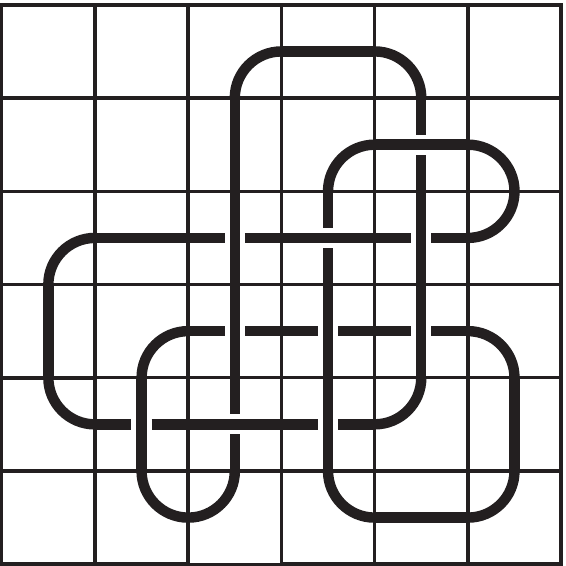}& 
\includegraphics[width=.145\textwidth]{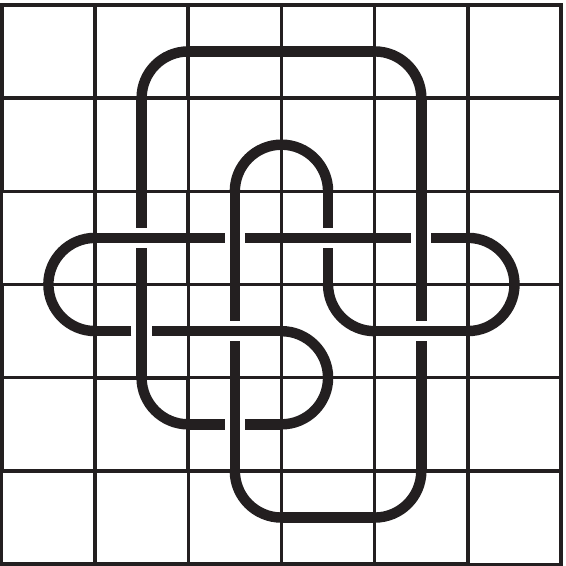}&
\includegraphics[width=.145\textwidth]{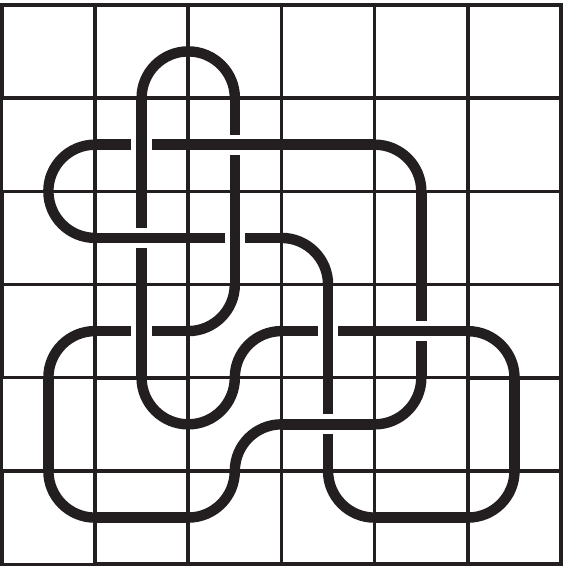}&
\includegraphics[width=.145\textwidth]{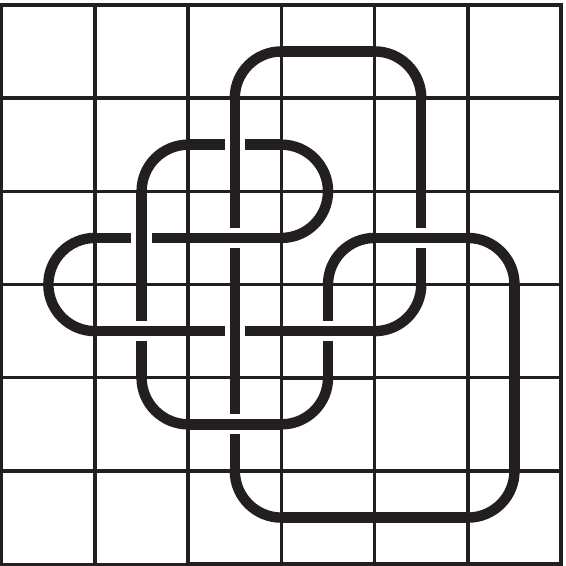}&
\includegraphics[width=.145\textwidth]{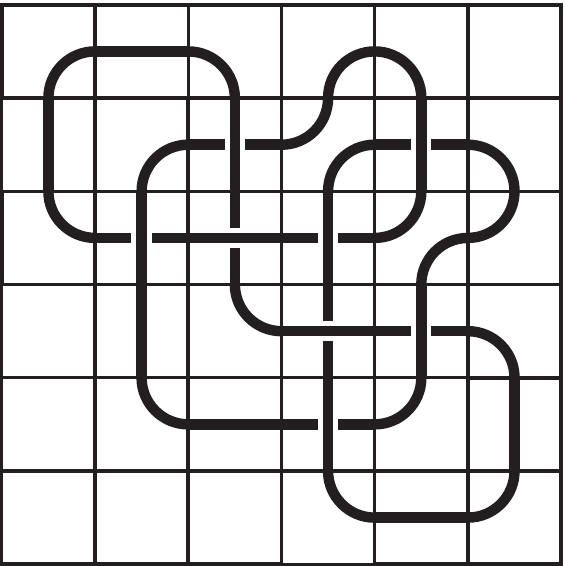}\\
$8_{10}$&$8_{11}$&$8_{12}$&$8_{13}$&$8_{14}$&$8_{15}$\\
&&&&&\\
\includegraphics[width=.145\textwidth]{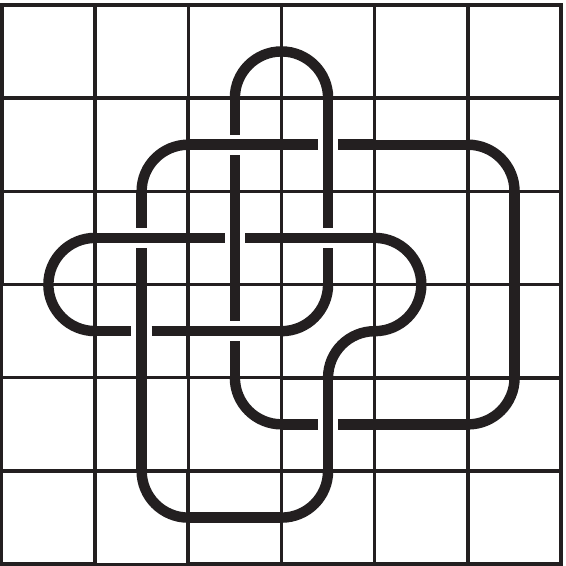}&
\includegraphics[width=.145\textwidth]{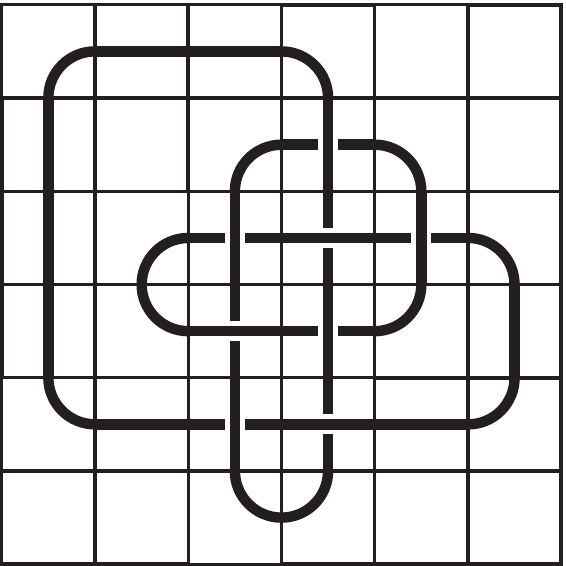}& 
\includegraphics[width=.145\textwidth]{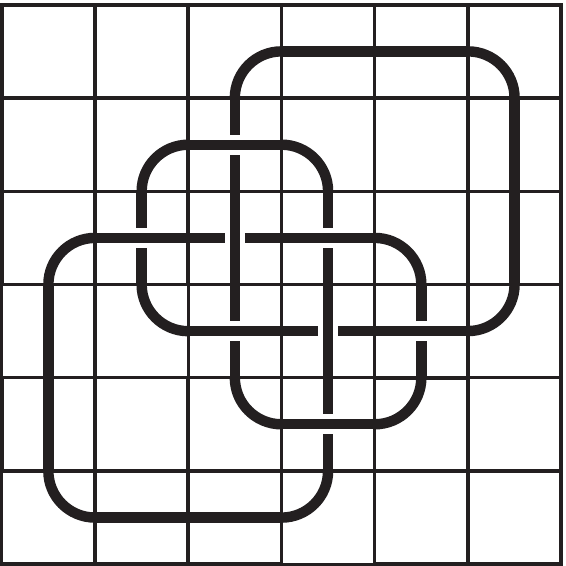}&
\includegraphics[width=.145\textwidth]{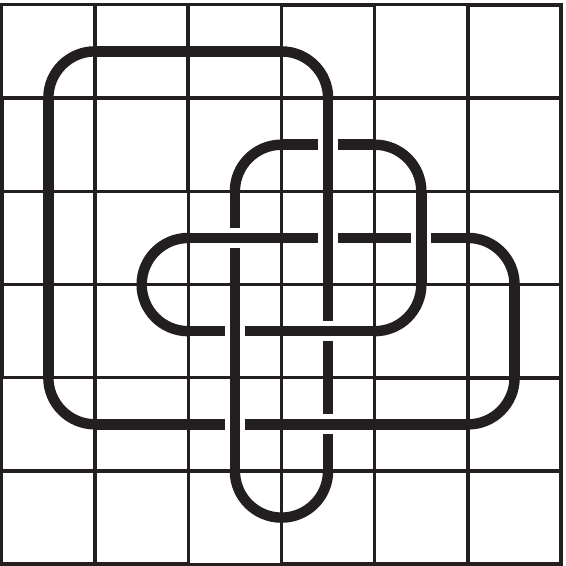}&
\includegraphics[width=.145\textwidth]{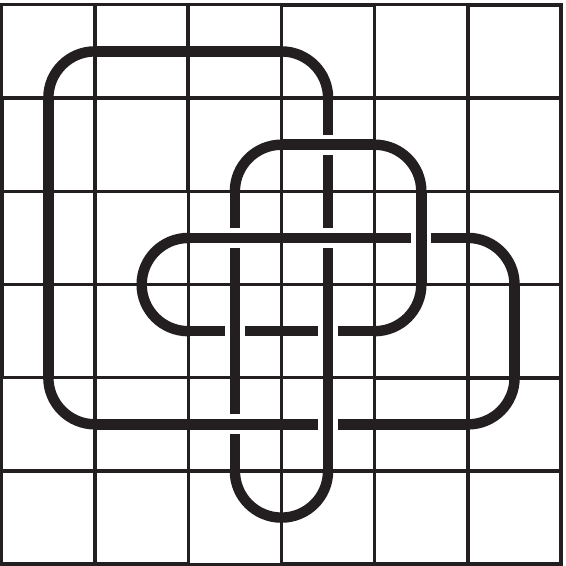}&
\includegraphics[width=.145\textwidth]{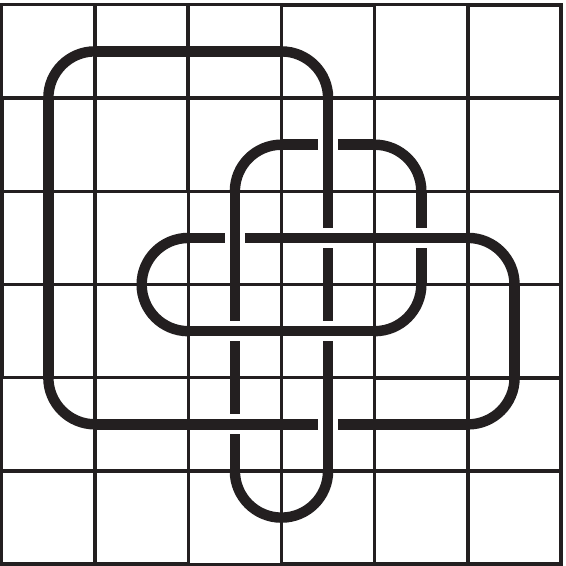}\\
$8_{16}$&$8_{17}$&$8_{18}$&$8_{19}$&$8_{20}$&$8_{21}$\\
&&&&&\\
\end{tabular}
\end{table}

\bibliographystyle{plain}

\end{document}